\theoremstyle{definition}
\newtheorem{The}{Theorem}[section]    
\newtheorem{Lem}[The]{Lemma}
\newtheorem{Cor}[The]{Corollary}
\newtheorem{Pro}[The]{Proposition}
\newtheorem{Cla}{Claim}
\newtheorem{Prob}[The]{Problem}
\newtheorem{Cas}{Case}
\theoremstyle{remark}
\newtheorem{Not}[The]{Note}
\theoremstyle{definition}
\numberwithin{equation}{section}
\begin{document}
\title{Cubic bricks  that every $b$-invariant edge is forcing
\footnote{This work is supported by NSFC (Grant Nos. 12271229 and 12271235).}}
\author{Yaxian Zhang$^1$, Fuliang Lu$^2$ and Heping Zhang$^1$\thanks{Corresponding author.} }
\date{
   {\small 1. School of Mathematics and Statistics, Lanzhou University, Lanzhou, Gansu 730000, P.R. China}\\
   {\small 2. School of Mathematics and Statistics, Minnan Normal University, Zhangzhou, Fujian 363000, P.R. China}\\
   {\small E-mails:\ yxzhang2016@lzu.edu.cn, flianglu@163.com, zhanghp@lzu.edu.cn}
}

\maketitle
\begin{abstract}
   A connected graph $G$ is matching covered if every edge lies in some perfect matching of $G$. Lov\'asz  proved that every matching covered graph $G$ can be uniquely
   decomposed into a list of bricks (nonbipartite) and braces (bipartite) up to multiple edges. Denote by
   $b(G)$  the number of bricks of $G$.
   An edge $e$ of $G$ is removable if $G-e$ is also matching covered, and solitary (or forcing)
   if after the removal of the two end vertices of $e$, the left graph has a unique perfect matching.
   Furthermore, a removable edge $e$ of a brick $G$ is $b$-invariant if $b(G-e)=1$.

   Lucchesi and Murty proposed a problem of characterizing bricks,
   distinct from $K_4$, $\overline{C_6}$ and the Petersen graph,
   in which every $b$-invariant edge is forcing.
   We answer the problem for cubic bricks by showing that there are exactly
   ten cubic bricks, including $K_4$, $\overline{C_6}$ and the Petersen graph,
   every $b$-invariant edge of which is forcing.
   \vskip 0.1 in
    \noindent {\bf Keywords:} \ Matching covered graph; Cubic brick; $b$-invariant edge; Forcing edge
    \medskip
\end{abstract}
\section{Introduction}
All graphs in this paper are finite and contain no loops (multiple edges are allowed).
An edge subset of a graph is a \emph{perfect matching} if
every vertex is incident with exactly one edge of the subset.
A connected graph is \emph{matching covered} if it contains at least one edge,
and each edge belongs to some perfect matching.
For the terminology on matching covered graphs,
we follow Lov\'asz and Plummer \cite{Lovasz1986}.

Let $G$ be a connected graph with vertex set $V(G)$ and edge set $E(G)$.
For any nonempty proper subset $X\subset V(G)$,
we call $\nabla(X)$ an \emph{edge cut} of $G$, which is the set of edges with one end vertex
in $X$, the other in $\overline{X}=V(G)-X$.
Specially, if $X$ or $\overline{X}$ contains only one vertex, then the edge cut
$\nabla(X)$ is called \emph{trivial}; otherwise, \emph{nontrivial}.
An edge cut of $G$ is a \emph{tight cut},
if it intersects each perfect matching of $G$ exactly at one edge.
A matching covered graph without nontrivial tight cuts is a \emph{brick},
if it is nonbipartite; a \emph{brace}, otherwise.

A graph $G$, with four or more vertices, is \emph{bicritical} if $G-u-v$
has at least one perfect matching for every pair of distinct vertices $u$ and $v$.
Edmonds et al. \cite{Edmonds1982} showed that
a graph $G$ is a brick if and only if it is 3-connected bicritical
(also see Lov\'asz \cite{Lovasz1987},
Szigeti \cite{Szigeti2002} and de Carvalho et al. \cite{Carvalho2018}).
 Every matching
covered graph $G$ can be decomposed into a unique list of bricks and braces up to multiple edges by a procedure
called the tight cut decomposition (see Lov\'asz \cite{Lovasz1987}).
Let $b(G)$ denote the number of bricks in such list.

An edge $e$ of a matching
covered graph $G$ is removable if $G-e$ is also matching covered, and is $b$-invariant if $b(G-e)=b(G)$.
Confirming a conjecture of Lov\'asz, de Carvalho et al.  obtained the following result.

\begin{The}
   \rm \cite{Carvalho2002}
   \label{ex-b}
   Every brick, other than $K_4$, $\overline{C_6}$ and the Petersen graph, has at least
   one $b$-invariant edge.
\end{The}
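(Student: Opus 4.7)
The plan is to argue by induction on $|V(G)|$ using the theory of ear decompositions and tight cuts in matching covered graphs. The base cases are the smallest bricks not among the three exceptions; for these one verifies by direct inspection that a $b$-invariant edge exists.

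I would begin by invoking the classical fact that every brick distinct from $K_{4}$ has at least one removable edge (a consequence of Lov\'asz's ear decomposition theorem). For any removable edge $e$ of $G$, the graph $G-e$ admits a tight cut decomposition, and $e$ is $b$-invariant precisely when $b(G-e)=1$, meaning that $G-e$ decomposes into a single brick (plus some braces). Choose a removable edge $e^{\ast}$ minimizing $b(G-e)$ over all removable edges of $G$; the task becomes to show $b(G-e^{\ast})\leq 1$ whenever $G\notin\{K_{4},\overline{C_{6}},\text{Petersen}\}$.

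For the inductive step, suppose towards a contradiction that $b(G-e)\geq 2$ for every removable edge $e$. Then each $G-e$ admits a nontrivial tight cut $\nabla(X_{e})$. Since $G$ itself has no nontrivial tight cut, the cut in $G$ determined by the same vertex set, namely $\nabla(X_{e})\cup\{e\}$, is \emph{near-tight}: it meets every perfect matching of $G$ in exactly one or exactly two edges, with $e$ present precisely in the two-edge case. This imposes a barrier-like rigidity on $X_{e}$. Applying the induction hypothesis to the bricks produced by the tight cut decomposition of $G-e^{\ast}$, I would obtain $b$-invariant edges in those smaller bricks and attempt to lift one of them back to a $b$-invariant edge of $G$, exploiting the fact that an edge strictly inside a shore of a tight cut plays the same role in $G$ as it does in the corresponding brick of the decomposition.

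The chief obstacle I anticipate is controlling the family of near-tight cuts as $e$ varies over all removable edges of $G$ simultaneously. Each removable edge gives its own barrier-like set $X_{e}$, and one must argue that the intersection pattern of these sets, combined with the $3$-connectedness and bicriticality of $G$, is so rigid that $G$ is forced to be one of $K_{4}$, $\overline{C_{6}}$, or the Petersen graph. I expect this step to require a delicate case analysis organized by the minimum size of a nontrivial tight cut occurring in some $G-e$, by the structure of the associated barriers, and possibly by the degree sequence of $G$; the three exceptional bricks should then emerge as the only extremal configurations surviving all of these constraints.
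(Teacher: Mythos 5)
This statement is not proved in the paper at all: it is Theorem~\ref{ex-b}, quoted from de Carvalho, Lucchesi and Murty \cite{Carvalho2002}, where its proof (together with the companion paper \cite{Carvalho2002II}) runs to many dozens of pages and rests on a whole theory of ear decompositions, the ``characteristic'' of a matching covered graph, and a fine structural analysis of bricks. So there is no in-paper argument to compare yours against; your proposal has to stand on its own, and as written it does not. The decisive gap is that the entire content of the theorem is concentrated in the step you defer: you suppose that $b(G-e)\geq 2$ for every removable edge $e$ and then say you ``expect'' a delicate case analysis on the family of near-tight cuts $\nabla(X_e)$ to force $G\in\{K_4,\overline{C_6},\text{Petersen}\}$. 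That is not a proof step, it is a restatement of the theorem; nothing in your outline indicates how the barriers $X_e$ interact, why their intersection pattern is rigid, or why exactly these three graphs survive. Likewise, ``lifting'' a $b$-invariant edge of a brick of the decomposition of $G-e^{\ast}$ back to $G$ is not justified: an edge lying inside a shore of a tight cut of $G-e^{\ast}$ need not even be removable in $G$, and controlling $b(G-f)$ for such an edge $f$ requires precisely the machinery you have not supplied.

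Two smaller but genuine errors: (i) it is false that every brick distinct from $K_4$ has a removable edge --- $\overline{C_6}$ has none (deleting any edge of the prism leaves some edge in no perfect matching), and the correct statement of Lov\'asz's theorem excludes both $K_4$ and $\overline{C_6}$; (ii) your ``near-tight'' claim is inaccurate as stated, since when $e$ does not cross $X_e$ a perfect matching of $G$ containing $e$ may meet $\nabla_G(X_e)$ in any odd number of edges, not just one or two, so the asserted rigidity does not follow. In short, the proposal is a plausible-sounding research plan whose hard core is exactly the unproved part; it cannot be accepted as a proof of Theorem~\ref{ex-b}.
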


De Carvalho et al. \cite{Carvalho2002II} also showed that
the bicorn (the graph $G_2$ in Fig. \ref{f4}) is the only brick with exactly one $b$-invariant edge.
Kothari et al. \cite{KCLL20} showed that every essentially 4-edge-connected cubic nonnear-bipartite brick $G$, distinct from the Petersen graph, has at least $|V(G)|$ $b$-invariant edges.
Moreover, they conjectured every essentially 4-edge-connected cubic near-bipartite brick $G$, distinct from $K_4$, has at least $|V(G)|/2$ $b$-invariant edges;
Lu et al. \cite{LXY19} confirmed this conjecture.
A brick is {\em solid} if $G-(V(C_1)\cup V(C_2))$ has no perfect matching for any two vertex disjoint odd cycles $C_1$ and $C_2$.
De Carvalho et al. \cite{CLM12} proved that every solid brick, distinct from $K_4$, has at least $\frac{|V(G)|}{2}$ $b$-invariant edges.

Lucchesi and Murty \cite{Lucchesi2022} used $b$-invariant edges to characterize extremal
matching covered graphs and proposed solitary edge.
An edge of a graph is \emph{solitary} if it lies in precisely one perfect matching.
In fact the solitary edge appeared in benzenoid hydrocarons of theoretical chemistry under name ``forcing edge''.
General forcing concept, under another name innate degree of freedom due to
Klein and Randi\'c \cite{Klein1987}, were proposed by Harary et al.
\cite{Harary1991}.
For hexagonal systems, Zhang et al. \cite{zhang1988} completely determined the structure
of a hexagonal system whose resonance graph has a 1-degree vertex and exactly two 1-degree vertices respectively.
As a consequence, Zhang and Li \cite{zhang1995} determined
the hexagonal systems with a forcing edge.
Hansen and Zheng \cite{hansen1994} independently obtained the same result.
For polyominoes, Sun et al. \cite{sun2024} recently
characterized those graphs whose resonance graphs have a 1-degree
vertex, and as a consequence, determined all such graphs with a forcing edge.
For 3-connected cubic graphs, Wu et al. \cite{Ye2016} showed that the graph with a
forcing edge can be generated from $K_4$ via replacing a vertex by a triangle repeatedly.

A matching covered graph $G$ is \emph{extremal}
if the number of perfect matchings of $G$ is equal to the dimension of the
lattice spanned by the set of incidence vectors of perfect matchings of $G$.
De Carvalho et al. \cite{Carvalho2004} proved that a matching covered graph $G$
is extremal if and only if (i) every $b$-invariant edge of $G$ is solitary and (ii) for any $b$-invariant edge of $G$,
the graph $G-e$ is extremal.
More generally, Lucchesi and Murty proposed the following problem, see Unsolved Problems 1. in \cite{Lucchesi2022}.

\begin{Prob}\rm \cite{Lucchesi2022}
Characterize bricks, distinct from $K_4$, $\overline{C_6}$ and the Petersen graph, in which every
$b$-invariant edge is solitary.
\end{Prob}

Obviously, $K_4$, $\overline{C_6}$ and the Petersen graph are cubic bricks in which every $b$-invariant edge of $G$ is solitary,
since they contain no $b$-invariant edges \cite{Carvalho2002}.
In this paper, we consider other cubic bricks.
Based on a result in \cite{Ye2016},
a cubic brick with a forcing edge can be generated from $K_4$ via replacing a vertex by a triangle repeatedly.
According to study changes of forcing edges and $b$-invariant edges under the above operation,
we get the main result as follows.

\begin{The}
   \label{th1}
   Let $G$ be a cubic brick other than $K_4$, $\overline{C_6}$ and the Petersen graph. All $b$-invariant edges in $G$
   are forcing edges if and only if $G$ is one of the graphs in $\{G_i,2\leq i\leq 8\}$
   (see Fig. \ref{f4}).
\end{The}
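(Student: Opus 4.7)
The plan is to split the argument into a finite sufficiency verification and a structural necessity argument that exploits the result of Wu et al. For sufficiency, each $G_i$ with $2\le i\le 8$ is an explicit cubic brick on a small number of vertices. I would enumerate the removable edges of $G_i$, identify which of them are $b$-invariant by computing $b(G_i-e)$, and for each such edge $uv$ verify that $G_i-u-v$ has a unique perfect matching. Since the $G_i$'s are explicit and small, this reduces to a direct finite check.

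For necessity, let $G$ be a cubic brick distinct from $K_4$, $\overline{C_6}$, and the Petersen graph in which every $b$-invariant edge is forcing. Theorem~\ref{ex-b} supplies a $b$-invariant edge $e$ of $G$, which by hypothesis must also be forcing, so $G$ contains at least one forcing edge. By the result of Wu et al.~\cite{Ye2016}, $G$ is then obtainable from $K_4$ by a sequence of operations that replace a vertex by a triangle. The remaining task is to prove that within this class, exactly the graphs $G_2,\ldots,G_8$ satisfy the property.

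The technical core is to study how a single ``replace a vertex by a triangle'' operation affects the sets of $b$-invariant and forcing edges. Let $G'$ be a cubic brick and let $G$ arise from $G'$ by replacing a vertex $v$ with neighbours $v_1,v_2,v_3$ by a triangle $T=xyz$, with link edges $xv_1,yv_2,zv_3$. Perfect matchings of $G$ correspond, via contracting $T$ back to $v$, to a controlled family of perfect matchings of $G'$, so the status of every edge of $G$ can be read off from local data near $v$ in $G'$. I would formalize this correspondence and use it to derive, as a sequence of small lemmas, explicit criteria for each triangle edge, each link edge, and each unchanged edge of $G$ to be removable, $b$-invariant, or forcing, in terms of the analogous properties of $G'$ at $v$.

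The main obstacle lies in the inductive step: one must show that once $G$ is larger than $G_8$, at least one $b$-invariant edge of $G$ fails to be forcing. I plan to formulate this as a non-extension result: for each of $G_2,\ldots,G_8$, and more generally for any cubic brick in the Wu et al.\ class that already falls outside the list, any further triangle expansion creates a $b$-invariant edge that lies in at least two perfect matchings. The hardest part is the combinatorial case analysis, where every vertex at which one might expand must be considered and an offending edge exhibited. My expectation is that the failure can always be localised near the newly created triangle $T$: either a triangle edge itself becomes $b$-invariant but non-forcing, or one of the link edges $xv_1,yv_2,zv_3$ does, as witnessed by two perfect matchings of $G'$ that agree near $v$. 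Combining the sufficiency check, the reduction via Wu et al., and this non-extension analysis then yields Theorem~\ref{th1}.
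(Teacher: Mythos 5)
Your overall strategy coincides with the paper's: a finite check for sufficiency, the reduction via Theorem~\ref{forcing-edges} to graphs generated from $K_4$ by $Y\rightarrow\triangle$-operations, and then an argument that every expansion beyond the listed graphs produces a $b$-invariant non-forcing edge. However, as written the proposal has two genuine gaps. First, your non-extension claim is false in the form stated: $G_3$ and $G_4$ are triangle expansions of $G_2$, $G_5$ of $G_3$, and $G_7,G_8$ of $G_5$, so it is not true that ``for each of $G_2,\ldots,G_8$ \ldots any further triangle expansion creates a $b$-invariant edge that lies in at least two perfect matchings.'' The correct statement must single out which expansions of which bases fail, and this is exactly the case analysis (the graphs $R_0,\ldots,R_6$) that constitutes most of the paper's necessity proof; it cannot be dispatched by a uniform claim.

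Second, and more seriously, the induction does not close without a persistence lemma for $b$-invariance: if a base $H$ of $G$ has a $b$-invariant non-forcing edge $e$, you need $e$ to remain $b$-invariant (and non-forcing) in $G$ itself. Non-forcingness propagates easily (the paper's Lemma~\ref{lem-f}), but $b$-invariance does not follow from ``local data near $v$'': whether $b(G-e)=1$ depends on the global tight-cut structure of $G-e$, and your hoped-for local criteria for each triangle/link/unchanged edge are not available in general. The paper only proves persistence for a specific configuration --- the bottom edge of a pyramid (Lemma~\ref{lem4}) --- and even that requires identifying two nontrivial tight cuts of $G-e$, showing the contracted graph is a brick, and a separate lemma (Lemma~\ref{GY}) that generalized $Y\rightarrow\triangle$-operations on degree-$4$ vertices preserve bricks, because expanding a neighbour of an endpoint of $e$ changes the contracted graph in a non-cubic way. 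Relatedly, your plan does not address the case where an endpoint of the offending edge is itself replaced by a triangle in a later expansion (so the edge no longer exists in $G$); the paper handles this by showing that such expansions force a different base, which feeds back into the case analysis. Until these points are supplied, the necessity direction is a program rather than a proof.
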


   \rm
It should be noted that $\{G_i,2\leq i\leq 8\}$, $K_4$, $\overline{C_6}$,
the Petersen graph and the theta graph (the cubic bipartite graph with precisely two vertices)
form the set of all extremal cubic matching covered graphs \cite{Carvalho2004}.
Thus we can directly get the following result.
\begin{Cor}
   A cubic brick $G$  is extremal if and only if every $b$-invariant edge of $G$ is solitary.
\end{Cor}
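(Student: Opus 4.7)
The plan is to split the proof into a routine sufficiency part and a structural necessity part. For sufficiency, I would simply verify, for each of the seven explicit graphs $G_2,\dots,G_8$ in Fig.~\ref{f4}, the two finite data: the set of $b$-invariant edges and the set of forcing edges, and check the claimed inclusion. Since each $G_i$ has at most a fixed small number of vertices this is a direct enumeration (one can use the characterization of a forcing edge $uv$ as ``$G-u-v$ has a unique perfect matching'' together with the tight-cut decomposition of $G-e$ to detect $b$-invariance).

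For the necessity, let $G$ be a cubic brick different from $K_4$, $\overline{C_6}$ and the Petersen graph, and suppose every $b$-invariant edge of $G$ is forcing. By Theorem~\ref{ex-b} the graph $G$ does have a $b$-invariant edge, so by hypothesis $G$ has at least one forcing edge. The structural result of Wu et al.~\cite{Ye2016} then applies: $G$ can be generated from $K_4$ by repeatedly \emph{replacing a vertex by a triangle}; call this operation $\tau$. Writing $G=\tau_k\circ\tau_{k-1}\circ\cdots\circ\tau_1(K_4)$ with $|V(G)|=4+2k$, the whole task reduces to showing that if $k\ge 1$ then $G\in\{G_2,\dots,G_8\}$, and in particular $k\le 7$.

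The engine of the argument will be a careful local study of how a single triangle substitution at a vertex $v$ of a cubic brick $H$ transforms the two sets
\[
F(H)=\{\text{forcing edges of }H\},\qquad B(H)=\{b\text{-invariant edges of }H\}.
\]
I would establish the following: (i) the three new triangle edges are never forcing in $\tau(H)$, and their $b$-invariance can be read off from the local structure at $v$; (ii) the three new ``spoke'' edges, joining the triangle to the rest of $\tau(H)$, inherit forcing status from the corresponding edges at $v$ in $H$ in a controlled way; (iii) an old edge $f\in E(H)\setminus\nabla(v)$ remains forcing (respectively $b$-invariant) in $\tau(H)$ exactly when a specifiable parity/tight-cut condition around $v$ holds. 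The last ingredient relies on characterizing $b$-invariance via the tight-cut decomposition of $H-f$ together with the bicriticality test for bricks recalled in the introduction.

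With these transition rules in hand, I would argue by induction on $k$: starting from $K_4$ (six forcing edges, no $b$-invariant edges), enumerate the inequivalent choices of vertex at each step and prune those branches that introduce a $b$-invariant edge which is not forcing. The surviving branches should grow only ``path-like'' along adjacent triangles, producing exactly the finite list $G_2,\dots,G_8$. The hardest part will be the pruning step: I expect that once the triangles created by successive $\tau$'s are placed non-consecutively, or once one performs more than a bounded number of substitutions along the same path, some old edge becomes $b$-invariant while ceasing to be forcing, giving the required witness. Identifying this witness uniformly, so as to close off every extension beyond $G_8$, is where the real work lies, and will likely require a separate sublemma saying that in any cubic brick obtained by ``too many'' triangle substitutions one can always find two distinct perfect matchings containing a common $b$-invariant edge.
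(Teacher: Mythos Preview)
Your proposal addresses Theorem~\ref{th1}, not the Corollary you were asked to prove. The Corollary asserts the equivalence, for cubic bricks, of \emph{extremality} and the property ``every $b$-invariant edge is solitary''; the word ``extremal'' does not appear anywhere in your plan. The paper derives the Corollary in one line from two ingredients: Theorem~\ref{th1}, which says that the cubic bricks satisfying the solitarity condition are exactly $K_4$, $\overline{C_6}$, the Petersen graph and $G_2,\dots,G_8$; and the classification in \cite{Carvalho2004} of extremal cubic matching covered graphs, which is the identical list (together with the theta graph, which is bipartite and hence not a brick). Without invoking either that explicit list of extremal cubic bricks or the recursive characterization of extremality recalled in the introduction (condition~(ii): $G-e$ is again extremal for every $b$-invariant $e$), your argument has no mechanism whatsoever to conclude that the graphs you eventually isolate are extremal.

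Even read as an outline for Theorem~\ref{th1}, your transition rules contain an error. Claim~(i), that the three new triangle edges are never forcing after a $Y\rightarrow\triangle$-operation, is false: if $xy_3$ is forcing in $G$, then the triangle edge $x_1x_2$ in $G^{\triangle}(x)$ is forcing---this is exactly how Corollary~\ref{Y-Op} accounts for forcing edges of $G^{\triangle}$ lying on the new triangle. The paper's pruning does not rely on such blanket rules; it produces, at each forbidden branch, a concrete witness edge (the bottom edge of a \emph{pyramid}) and uses Lemma~\ref{lem4} and Corollary~\ref{bf} to certify that this edge is $b$-invariant but not forcing, and that it remains so under all further $Y\rightarrow\triangle$-operations that leave its endpoints intact. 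Your sketch lacks any comparable device, and the vague ``parity/tight-cut condition'' in~(iii) is not a substitute.
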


\section{ Preliminaries \label{PRE}}
An edge in a connected graph $G$ is a \emph{bridge},
if its deletion results in a disconnected graph.
For a forcing edge $e$ of a graph $G$, $G-V(e)$ has a unique perfect matching.
The following theorem is helpful for determining whether an edge is a forcing edge.
\begin{The}
   \rm \cite{Kotzig1959}
   \label{BE}
   If $G$ is a connected graph with a unique perfect matching,
   then $G$ has a bridge belonging to the perfect matching.
\end{The}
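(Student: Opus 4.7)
The plan is to prove Theorem~\ref{BE} via the stronger combinatorial claim that every connected graph $G$ with a unique perfect matching $M$ contains at least one vertex of degree~$1$. Given this, the theorem follows at once: the unique edge $e$ incident to such a vertex lies in every perfect matching and therefore in $M$, while removing $e$ isolates the vertex and so disconnects $G$, making $e$ a bridge that belongs to~$M$.

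To establish the degree-one claim I would argue by contradiction, assuming $\delta(G)\ge 2$ and constructing an $M$-alternating walk $v_0,v_1,v_2,\ldots$ as follows. Begin with an arbitrary $M$-edge $v_0v_1$. Having just traversed the $M$-edge $v_{2i}v_{2i+1}$, use $\deg(v_{2i+1})\ge 2$ to select an edge incident to $v_{2i+1}$ different from $v_{2i}v_{2i+1}$; its other endpoint $v_{2i+2}$ automatically satisfies $v_{2i+1}v_{2i+2}\notin M$, because $v_{2i+1}$ is already $M$-saturated by $v_{2i}$. Then take $v_{2i+3}$ to be the unique $M$-partner of $v_{2i+2}$ and continue. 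This produces an infinite sequence whose consecutive edges alternate between $M$ and $E(G)\setminus M$.

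Since $V(G)$ is finite, some vertex is visited twice; let $j$ be the smallest index with $v_j=v_i$ for some $i<j$. A parity analysis of the two visits shows that $j-i$ must be even: if it were odd, then both edges of the closed segment meeting $v_i=v_j$ would be of the same type, which combined with the uniqueness of $M$-partners (or the structure of the walk at the earlier vertex $v_{i-1}=v_{j-1}$) would contradict the minimal choice of $j$. Hence the closed segment $v_iv_{i+1}\cdots v_j$ is an $M$-alternating cycle $C$, and the symmetric difference $M\triangle E(C)$ is a perfect matching different from $M$, contradicting uniqueness of $M$. Therefore some vertex of $G$ has degree~$1$.

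The main obstacle is the extraction of a genuine $M$-alternating cycle from the walk at the first vertex repetition; this requires the parity bookkeeping sketched above, and is what justifies constructing the walk one edge at a time so that the uniqueness of $M$-partners can be used to rule out the bad parity. Parallel edges, which are permitted in this paper, cause no extra trouble: a pair of parallel edges between two vertices with exactly one in $M$ already constitutes a length-$2$ alternating cycle, precisely the structure the argument produces at its shortest.
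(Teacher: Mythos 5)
Your argument has a fatal gap: the ``stronger combinatorial claim'' you reduce to --- that every connected graph with a unique perfect matching has a vertex of degree $1$ --- is false for general (non-bipartite) graphs, which are exactly the graphs this paper cares about. Take two disjoint triangles $abc$ and $def$ joined by the single edge $cd$. Every perfect matching must use $cd$ (otherwise each triangle would need an internal perfect matching on three vertices), and then $ab$ and $ef$ are forced, so $M=\{ab,cd,ef\}$ is the unique perfect matching; yet the minimum degree is $2$. The theorem itself is of course still true here: $cd$ is a bridge lying in $M$.

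The place where your walk argument breaks is precisely the ``parity analysis'' you defer. Running your construction on the example above (start $a,b$, then $b\to c$, $c\to d$ by $M$, $d\to e$, $e\to f$ by $M$, $f\to d$) the first repeated vertex is $d$, and the closed segment $d,e,f,d$ has odd length: its two edges meeting $d$ are \emph{both} non-matching edges. Uniqueness of $M$-partners only forbids the repetition where both incident edges of the closed segment are $M$-edges; it does nothing to exclude the odd case with two non-$M$-edges, so no alternating cycle is produced and no second perfect matching arises. (Your claim is a correct and standard lemma for \emph{bipartite} graphs, where all cycles are even and the parity step goes through --- that is likely the source of the confusion.) Since the paper only cites Kotzig's theorem without proof, there is nothing on its side to compare with, but as written your proposal does not prove the statement; a correct elementary route is, e.g., induction using the cut structure of $G$ rather than a minimum-degree claim.
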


Let $X$ be a nonempty proper vertex subset of  a graph $G$.
Denote by $N_G(X)$ the subset of $V(G)\setminus X$,
every vertex in it has a neighbour in $X$;
if $X$ contains only one vertex $x$, then we simply write $N_G(X)$  as $N_G(x)$ (or simply $N(X)$ or $N(x)$ if no confusion occurs).

We follow the definition of $Y \rightarrow \triangle$-operations in
\cite{Ye2016}.
Let $x$ be a vertex of degree 3 in a graph $G$ with $N(x)=\{y_1,y_2,y_3\}$.
A \emph{$Y \rightarrow \triangle$-operation} of $G$ on $x$, denoted by $G^{\triangle}(x)$ or simply  $G^{\triangle}$,  means to replace
$x$ by a triangle $x_1x_2x_3x_1$ and join  $x_i$ to $y_i$ for $i=1,2,3$
(see Fig. \ref{f1}).
We call $x_1x_2x_3x_1$ the replacement-triangle of $G^{\triangle}$ corresponding to the vertex $x$; and
$x_iy_i$ the edge of $G^{\triangle}$ corresponding to the edge $xy_i$ in $G$.

\begin{figure}[ht]
   \centering
   \includegraphics[scale=0.35]{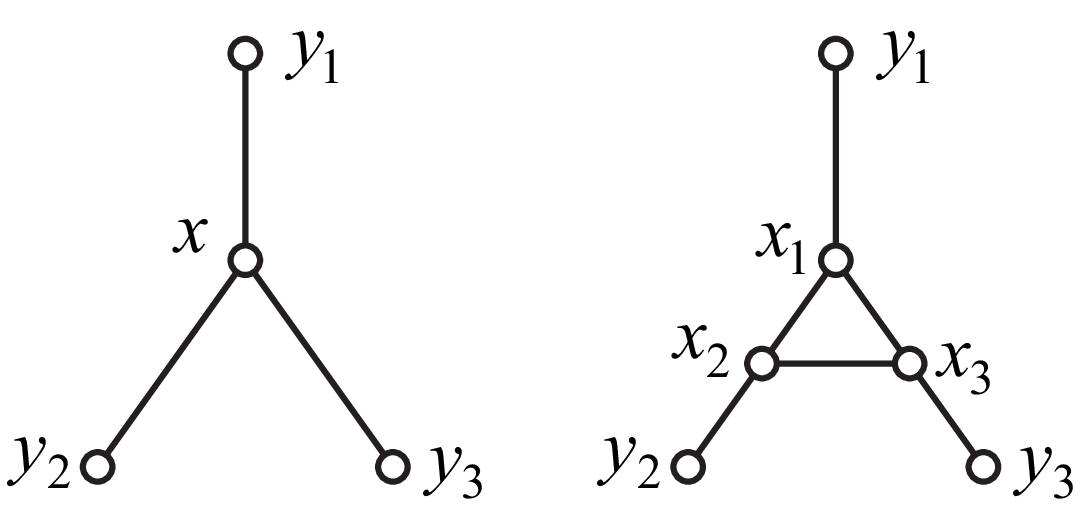}
   \caption{\label{f1} Performing a $Y \rightarrow \triangle$-operation on $x$.}
\end{figure}

 Wu et al. \cite{Ye2016} presented a theorem on
cubic graphs with forcing edges.

\begin{The}
   \rm \cite{Ye2016}
   \label{forcing-edges}
   Let $G$ be a 3-connected cubic graph.
   If $G$ has a forcing edge, then $G$ is generated from $K_4$ via a series of
   $Y \rightarrow \triangle$-operations.
\end{The}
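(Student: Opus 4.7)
The plan is to split the proof into the two directions. For the "if" direction I would verify for each graph $G_i$ with $2\le i\le 8$ separately that every $b$-invariant edge is a forcing edge. Since these are finitely many small cubic bricks, one can first enumerate their perfect matchings to identify the $b$-invariant edges by deleting each removable edge $e$ and checking whether $b(G_i-e)=1$; then for each such edge $e$ one applies Theorem \ref{BE} to verify that $G_i-V(e)$ has a bridge belonging to its unique perfect matching. This is a finite but unavoidable case check.

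The substance of the argument is the "only if" direction. Let $G$ be a cubic brick distinct from $K_4$, $\overline{C_6}$ and the Petersen graph in which every $b$-invariant edge is forcing. By Theorem \ref{ex-b}, $G$ possesses at least one $b$-invariant edge, and the hypothesis promotes it to a forcing edge. Theorem \ref{forcing-edges} then guarantees that $G$ arises from $K_4$ through a nonempty sequence of $Y\rightarrow\triangle$-operations. I would proceed by induction on the number $n$ of such operations; the base case $n=1$ yields exactly the bicorn $G_2$.

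To drive the induction I will need two auxiliary lemmas describing how $b$-invariance and the forcing property behave under a single $Y\rightarrow\triangle$-operation at a cubic vertex $x$ with neighbours $y_1,y_2,y_3$. The first should locate, inside $G^{\triangle}$, a canonical list of edges that are automatically $b$-invariant whenever $G^{\triangle}$ is a brick; the natural candidates are the three pendant edges $x_iy_i$ and the three triangle edges $x_ix_j$, and their removability and $b$-invariance can be analysed by relating tight cuts of $G^{\triangle}$ to those of $G$ together with standard brick-preservation results. The second lemma should translate, via Theorem \ref{BE}, the forcing property of an edge $e$ of $G^{\triangle}$ into the existence of a bridge in the unique perfect matching of $G^{\triangle}-V(e)$, expressing this condition purely in terms of the position of $e$ relative to the replacement-triangle $x_1x_2x_3$ and small structural data of $G$ near $x$.

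The principal obstacle will be showing that once the number of $Y\rightarrow\triangle$-operations exceeds the bound witnessed by $G_8$, or is performed at vertices whose relative positions are "wrong", some $b$-invariant edge of $G^{\triangle}$ fails to be forcing, i.e.\ either $G^{\triangle}-V(e)$ has multiple perfect matchings or its unique perfect matching has no bridge. I expect the analysis to split on the mutual position of the newest replacement-triangle relative to the previous ones: when the new triangle is "far" from them, one of the new $b$-invariant edges forces a configuration in $G^{\triangle}-V(e)$ that violates Theorem \ref{BE}, yielding a non-forcing $b$-invariant edge and a contradiction; when it is placed adjacent to a previous replacement-triangle in a restricted way, one recovers precisely the members of $\{G_3,\ldots,G_8\}$. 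Managing this case split cleanly, ruling out the operations that would take us outside the list, and checking that no further $Y\rightarrow\triangle$-operation can be applied to any $G_i$ without producing a non-forcing $b$-invariant edge, should constitute the technical core of the proof.
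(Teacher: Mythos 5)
Your proposal does not address the statement it is supposed to prove. Theorem \ref{forcing-edges} is the assertion that a 3-connected cubic graph with a forcing edge is generated from $K_4$ by a sequence of $Y\rightarrow\triangle$-operations; it is a structural result about forcing edges alone, quoted from Wu, Ye and Zhang, and it says nothing about $b$-invariant edges or about the list $\{G_i, 2\le i\le 8\}$. What you have sketched instead is a proof of the paper's main result, Theorem \ref{th1} (the characterization of cubic bricks in which every $b$-invariant edge is forcing). Worse, your ``only if'' direction explicitly invokes Theorem \ref{forcing-edges} as a tool (``Theorem \ref{forcing-edges} then guarantees that $G$ arises from $K_4$\dots''), so read as a proof of Theorem \ref{forcing-edges} itself the argument is circular, and read as a proof of Theorem \ref{th1} it is answering the wrong question.

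A genuine proof of Theorem \ref{forcing-edges} has to run along entirely different lines: given a forcing edge $e=uv$, the graph $G-u-v$ has a unique perfect matching, so by Theorem \ref{BE} it has a bridge; one then analyzes the tree of maximal 2-edge-connected blocks of $G-u-v$ together with the four edges of $\nabla(\{u,v\})\setminus\{e\}$ (the paper's proof of Lemma \ref{cla2} carries out exactly this kind of analysis) to locate a triangle of $G$, shows that contracting that triangle yields a smaller 3-connected cubic graph which still has a forcing edge, and concludes by induction on $|V(G)|$ with base case $K_4$. None of these steps appears in your proposal, so as it stands there is no proof of the stated theorem at all.
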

De Carvalho et al. showed that the property of ``cubic brick'' is heritable under
$Y \rightarrow \triangle$-operations.
\begin{The} \rm \cite{Carvalho2004}
   \label{brick-expand}
   Let $G$ be a cubic brick.
   Then $G^{\triangle}$ is also a cubic brick.
\end{The}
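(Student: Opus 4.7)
The plan is to invoke the Edmonds--Lov\'asz characterization that a graph is a brick if and only if it is $3$-connected and bicritical, and verify both properties for $G^{\triangle}$. That $G^{\triangle}$ is cubic is immediate: each replacement vertex $x_i$ has three neighbors ($x_j$, $x_k$, $y_i$), while all other vertices keep the degree they had in $G$. So the real content is $3$-connectedness and bicriticality.

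For $3$-connectedness, I would suppose for contradiction that $\{u,v\}$ is a $2$-cut of $G^{\triangle}$ and split into cases by $|\{u,v\} \cap \{x_1, x_2, x_3\}|$. If the intersection is empty, then any path in $G-u-v$ (which is connected since $G$ is $3$-connected) that passes through $x$ as $y_\ell - x - y_m$ reroutes in $G^{\triangle}-u-v$ as $y_\ell - x_\ell - x_m - y_m$, and every $x_i$ reaches $V(G)\setminus\{x,u,v\}$ through the triangle together with at least one surviving edge $x_iy_i$. If the intersection has size one, say $u=x_i$, the remaining edge $x_jx_k$ of the triangle, together with the surviving external edges to $\{y_j,y_k\}$, attaches to the connected graph $G-x-v$. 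If the intersection has size two, the lone remaining triangle vertex is a pendant attached to the $2$-connected graph $G-x$. Each case contradicts the alleged $2$-cut.

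For bicriticality I would, given distinct $u,v \in V(G^{\triangle})$, construct a perfect matching of $G^{\triangle}-u-v$ by ``lifting'' a perfect matching of $G$: a perfect matching $M$ of $G$ containing $xy_i$ lifts to the perfect matching $(M \setminus \{xy_i\}) \cup \{x_iy_i, x_jx_k\}$ of $G^{\triangle}$. When $u,v \in V(G)\setminus\{x\}$ and not both lie in $\{y_1,y_2,y_3\}$, one applies bicriticality of $G$ to $\{u,v\}$ and lifts a suitable perfect matching; when exactly one of $u,v$ is a triangle vertex, one forces a particular triangle edge into the matching and reduces to bicriticality of $G$ on a pair that contains $x$ or a $y_i$; when $\{u,v\}\subseteq\{x_1,x_2,x_3\}$, the third triangle vertex must be matched to its unique surviving neighbor $y_i$, again reducing to bicriticality of $G$ on $\{x,y_i\}$.

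The main obstacle is the remaining subcase where $u,v$ both lie in $\{y_1,y_2,y_3\}$, say $u=y_1$, $v=y_2$. Here $x_1$ and $x_2$ have, inside $G^{\triangle}-u-v$, neighbors only in the triangle, which forces $x_1x_2$ (and hence $x_3y_3$) into any perfect matching and reduces the problem to finding a perfect matching of $G - \{x,y_1,y_2,y_3\}$. The trick is to apply bicriticality of $G$ to the pair $\{y_1,y_2\}$: in any perfect matching $M$ of $G-y_1-y_2$, the vertex $x$'s only surviving neighbor is $y_3$, so $xy_3\in M$, and then $M\setminus\{xy_3\}$ is the desired perfect matching of $G-\{x,y_1,y_2,y_3\}$. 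Combining this with the previous cases yields bicriticality, which together with $3$-connectedness gives that $G^{\triangle}$ is a (cubic) brick.
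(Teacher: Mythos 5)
Your proof is correct. The paper does not actually prove Theorem~\ref{brick-expand} (it is quoted from de Carvalho, Lucchesi and Murty), but your argument --- the Edmonds--Lov\'asz characterization of bricks as $3$-connected bicritical graphs, a case analysis on how the deleted pair meets the replacement triangle, and lifting perfect matchings through the triangle --- is exactly the strategy the paper itself uses to prove the analogous Lemma~\ref{GY} for the generalized $Y\rightarrow\triangle$-operation, so it matches the intended proof; note only that your ``main obstacle'' subcase $\{u,v\}\subseteq\{y_1,y_2,y_3\}$ is already handled by the same generic lift, since a perfect matching of $G-y_1-y_2$ necessarily contains $xy_3$ and lifts to one containing $x_3y_3$ and $x_1x_2$.
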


We call a graph $H$ is a \emph{base} of a graph $G$ if
$G$ can be obtained from $H$ via a series of $Y \rightarrow \triangle$-operations.
Note that $|V(H)|<|V(G)|$.
By Theorem \ref{brick-expand}, every graph that contains $K_4$ as a base is a cubic brick.
Let $X$ be a vertex subset of $G$.
We denote the graph, obtained from $G$ by contracting  $X$ to a single vertex $x$, by
 $G/X\rightarrow x$, or simply  $G/X$.
If $H$ is a subgraph of $G$, then let $G/V(H)=G/H$. Moreover, we have the following proposition.
\begin{Pro}
   \label{cor1}
   If $G\neq K_4$ and $G$ contains $K_4$ as a base, then for any triangle $T$ in $G$,
   $G/T$ is a cubic brick.
\end{Pro}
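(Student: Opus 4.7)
The plan is to trace how triangles of $G$ arise during the $Y\rightarrow\triangle$-expansion from $K_4$ to $G$, and show that for every triangle $T$ of $G$, the graph $G/T$ is itself obtainable from $K_4$ by a (possibly shorter) sequence of $Y\rightarrow\triangle$-operations, so that Theorem \ref{brick-expand} forces $G/T$ to be a cubic brick. Write $G_0 = K_4, G_1, \ldots, G_k = G$, where $G_i$ is obtained from $G_{i-1}$ by a $Y\rightarrow\triangle$-operation $O_i$; since $G \ne K_4$ we have $k \ge 1$. A first step I would establish is a classification of the triangles in $G$: every triangle $T$ of $G$ is either (i) the replacement-triangle of some $O_j$, or (ii) a triangle of $K_4$ whose three vertices were never operated on. The argument is a short adjacency check: a vertex $x_i$ of a replacement-triangle has exactly one neighbor outside that triangle, so a triangle of $G$ whose three vertices come from two or three distinct replacement-triangles, or that mixes original $K_4$-vertices with replacement-triangle vertices, would force two distinct neighbors of some $x_i$ to coincide.

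In case (i), since $T$ still appears in $G_k$, no $O_\ell$ with $\ell > j$ is applied to a vertex of $T$, for otherwise that vertex would be absent from $G$. Two $Y\rightarrow\triangle$-operations on disjoint vertex sets commute, so the sequence can be reordered to place $O_j$ last by successively swapping it with its successors. The resulting equivalent sequence shows that $G/T$ is exactly the graph produced from $K_4$ by the first $k-1$ operations of the reordered sequence, hence $K_4$ itself when $k = 1$ and a cubic brick otherwise by iterating Theorem \ref{brick-expand}.

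In case (ii), write $V(K_4) = \{v_1, v_2, v_3, v_4\}$ with $T = \{v_1, v_2, v_3\}$. Since $k \ge 1$ and none of $v_1, v_2, v_3$ is operated on, $O_1$ must be the $Y\rightarrow\triangle$-operation on $v_4$. A direct check gives $G_1/T \cong K_4$: the contraction identifies $v_1, v_2, v_3$ to a single vertex $t$, which becomes adjacent to the three vertices of the replacement-triangle produced by $O_1$, and those three vertices already form a triangle in $G_1$. Each later $O_2, \ldots, O_k$ operates on a vertex disjoint from $T$, so it commutes with the contraction of $T$, and therefore $G/T$ is obtained from $K_4 \cong G_1/T$ by applying $O_2, \ldots, O_k$. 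Theorem \ref{brick-expand} again yields the conclusion.

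The main obstacle is the bookkeeping: making the triangle classification precise, and justifying that the reordering of case (i) and the commutation of contraction with later operations in case (ii) work uniformly without any extra hypotheses on $G$. Once these points are handled, the proposition follows immediately from Theorem \ref{brick-expand}.
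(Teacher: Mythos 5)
Your proof is correct, but it takes a genuinely different route from the paper. You classify all triangles of $G$ as either replacement-triangles or surviving $K_4$-triangles, and then reorder the generating sequence of $Y\rightarrow\triangle$-operations so that the contraction of $T$ undoes the last operation; the paper instead argues by induction on $|V(G)|$, picking the replacement-triangle $T'$ of the \emph{last} operation, noting that distinct triangles in a cubic $3$-connected graph other than $K_4$ are vertex-disjoint (so $(G/T')/T=(G/T)/T'$), applying the induction hypothesis to $G/T'$, and then expanding back with Theorem \ref{brick-expand}. The payoff of the paper's induction is precisely that it never needs your triangle classification: it only needs $V(T)\cap V(T')=\emptyset$ when $T\neq T'$, which is immediate from cubicity and $3$-connectivity, whereas your classification (ruling out ``accidental'' triangles that mix vertices of different replacement-triangles, including replacement-triangles that do not survive intact to $G$) is exactly the bookkeeping you flag as the main obstacle. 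That step is true and your adjacency argument is the right idea --- each new vertex has a unique neighbour in each of three ``directions'', and two such neighbours in distinct directions can never be adjacent to each other unless they lie in a common replacement-triangle --- but carrying it out uniformly (in particular when some vertices of the relevant replacement-triangle are later operated on) takes more care than the paper's two-line disjointness observation. In exchange, your argument yields slightly more: an explicit description of all triangles of $G$ and the fact that the generating sequence can be freely reordered, neither of which the paper's proof provides.
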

\begin{proof}
   We apply induction on the order of $G$.
   Since $K_4$ is a base of $G$, $|V(G)|\geq 6$.
   If $|V(G)|=6$, then $G\cong \overline{C_6}$ and $G/T\cong K_4$, hence the result holds.
   Next we consider $|V(G)|\geq 8$.
   Let $T'$ be a triangle of $G$ such that $G/T'$ contains $K_4$ as a base.
   If $T'=T$, then $G/T'=G/T$.
   By Theorem \ref{brick-expand}, $G/T'$ is a cubic brick and we are done.
   If $T'\neq T$, then we can deduce $V(T')\cap V(T)=\emptyset$
   by the fact that $G$ is cubic and 3-connected,
   which implies that $(G/T')/T=(G/T)/T'$.
   By induction, $(G/T')/T$ is a cubic brick.
   Since $(G/T)/T'$ is a base of $G/T$, $G/T$ is a cubic brick by Theorem \ref{brick-expand}.
\end{proof}

Next we will show that the property of ``nonforcing'' is also heritable under
$Y \rightarrow \triangle$-operations.

\begin{Lem}
   \label{lem-f}
   Let $G$ be a cubic brick with a nonforcing edge $e$.
   Then the edge $e'$ in $G^{\triangle}$ corresponding to $e$ is also
   not a forcing edge of $G^{\triangle}$.
\end{Lem}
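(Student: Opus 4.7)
The plan is to build an injection from the set of perfect matchings of $G$ containing $e$ into the set of perfect matchings of $G^{\triangle}$ containing $e'$, and then use the hypothesis that $e$ is nonforcing — which, since $G$ is matching covered, means $e$ lies in at least two distinct perfect matchings — to produce two distinct perfect matchings of $G^{\triangle}$ through $e'$.

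Let $x$ be the vertex on which the $Y\rightarrow\triangle$-operation is performed, with $N(x)=\{y_1,y_2,y_3\}$, so that the replacement triangle is $x_1x_2x_3$ with edges $x_iy_i$ corresponding to $xy_i$. For any perfect matching $M$ of $G$, the vertex $x$ is matched to exactly one neighbour $y_j$, and I would set
\[
M^{\triangle}\;:=\;\bigl(M\setminus\{xy_j\}\bigr)\cup\{x_jy_j,\,x_ix_k\},
\]
where $\{i,j,k\}=\{1,2,3\}$. A direct check shows $M^{\triangle}$ is a perfect matching of $G^{\triangle}$, and the assignment $M\mapsto M^{\triangle}$ is injective: the inverse is obtained by contracting $\{x_1,x_2,x_3\}$ back to a single vertex. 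I would then split into two cases. If $e$ is not incident to $x$, then $e'=e$ and trivially $e\in M$ iff $e'\in M^{\triangle}$. If $e=xy_j$, then $e'=x_jy_j$, and the construction shows $e'\in M^{\triangle}$ precisely when $M$ matches $x$ to $y_j$, i.e.\ when $e\in M$. Picking two distinct perfect matchings $M_1,M_2$ of $G$ through $e$ therefore yields two distinct perfect matchings $M_1^{\triangle},M_2^{\triangle}$ of $G^{\triangle}$ through $e'$, proving that $e'$ is not forcing.

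The step I expect to require the most care is verifying the well-definedness and injectivity of $M\mapsto M^{\triangle}$; once the right ``companion triangle edge'' $x_ix_k$ is chosen, these are routine but must be stated explicitly. It is worth noting that $G^{\triangle}$ may also admit perfect matchings outside the image of this map — namely those that use all three of $x_1y_1,x_2y_2,x_3y_3$ and no triangle edge — but these play no role in the argument, since we only need to exhibit two perfect matchings of $G^{\triangle}$ through $e'$, which the extension already supplies.
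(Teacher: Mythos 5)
Your proposal is correct and follows essentially the same route as the paper's proof: both extend each perfect matching of $G$ through $e$ to a perfect matching of $G^{\triangle}$ through $e'$ by keeping the matched edge $x_jy_j$ at the triangle and adding the opposite triangle edge $x_ix_k$, and both conclude by noting that two distinct matchings through $e$ yield two distinct matchings through $e'$. Your explicit injectivity check and the remark about matchings of $G^{\triangle}$ outside the image are fine but not needed; the paper simply observes directly that the two extensions are distinct.
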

\begin{proof}
   Recall that $G^{\triangle}$ is obtained from $G$ by replacing $x$ with a triangle
   $T=x_1x_2x_3x_1$.
   Let $N_{G}(x)=\{y_1,y_2,y_3\}$. Then
   $e_i'=x_iy_i$ is an edge of $G^{\triangle}$ corresponding to $e_i=xy_i$ for $i=1,2,3$.
   Since $e$ is not a forcing edge of $G$,
   $G$ has at least two different perfect matchings
   $M_1$ and $M_2$ containing $e$.
   We can assume that $e_1\in M_1$ and $\{e_s\}=M_2\cap \{e_1,e_2,e_3\}$, where $s\in \{1,2,3\}$.
   Then $(M_1\setminus \{e_1\})\cup \{e_1',x_2x_3\}$
   and $(M_2\setminus \{e_s\})\cup \{e_s',E(T)\setminus \{x_s\}\}$ are two
   different perfect matchings of $G^{\triangle}$ containing $e'$.
   So $e'$ is not a forcing edge of $G^{\triangle}$.
\end{proof}

For a forcing edge of a cubic brick, if one end of the edge is replaced by a triangle, we have the following lemma.

\begin{Lem}
   \label{cla2}
   If $e=uv$ is a forcing edge of a cubic brick $G$,
   then the edge $e'$ of $G^{\triangle}(u)$ corresponding to $e$ is not a forcing edge.
\end{Lem}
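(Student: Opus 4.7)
The plan is to produce two distinct perfect matchings of $G^{\triangle}(u)$ that both contain $e'$, which will show $e'$ is not forcing. Write $N_G(u)=\{v,y_1,y_2\}$ and let $u_1u_2u_3u_1$ be the replacement-triangle with $u_1$ joined to $v$, $u_2$ to $y_1$, $u_3$ to $y_2$, so that $e'=u_1v$. Since $e$ is forcing, let $M$ be the unique perfect matching of $G$ containing $e=uv$.

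The first matching comes from lifting $M$ across the $Y\rightarrow\triangle$-operation: I would take $M_1:=(M\setminus\{uv\})\cup\{u_1v,u_2u_3\}$, which visibly covers every vertex of $G^{\triangle}(u)$ and contains $e'$. For the second matching, I look for a perfect matching that contains $e'$ but avoids the triangle edge $u_2u_3$. Since the neighbours of $u_2$ in $G^{\triangle}(u)$ are $\{u_1,u_3,y_1\}$ and those of $u_3$ are $\{u_1,u_2,y_2\}$, such a matching is forced to use $u_2y_1$ and $u_3y_2$. The remaining uncovered vertices form exactly the subgraph $G-\{u,v,y_1,y_2\}$, so the task reduces to producing a perfect matching of that subgraph.

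The key step, and the main (though mild) obstacle, is the existence of a perfect matching of $G-\{u,v,y_1,y_2\}$, which I would obtain by invoking bicriticality of the brick $G$. Since $G$ is bicritical, $G-y_1-y_2$ has a perfect matching $M^{*}$. In $G-y_1-y_2$ the vertex $u$ has degree $1$, its only surviving neighbour being $v$, so necessarily $uv\in M^{*}$. Therefore $N:=M^{*}\setminus\{uv\}$ is a perfect matching of $G-\{u,v,y_1,y_2\}$, and $M_2:=N\cup\{u_1v,u_2y_1,u_3y_2\}$ is a perfect matching of $G^{\triangle}(u)$ containing $e'$.

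To finish, $M_1\neq M_2$ because $u_2u_3\in M_1\setminus M_2$, so $e'$ lies in at least two perfect matchings of $G^{\triangle}(u)$ and is not forcing. Notice the uniqueness assumption on $M$ is not actually needed to construct $M_2$; it merely guarantees a concrete choice for $M_1$. What really drives the argument is that in a cubic brick, deleting the two non-$v$ neighbours of $u$ leaves $u$ with only $v$ available, so bicriticality automatically supplies the required matching of $G-\{u,v,y_1,y_2\}$.
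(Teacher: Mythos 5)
Your proof is correct, and it takes a genuinely different route from the paper's. The paper argues via Kotzig's theorem: it shows that $G-u-v$, having a unique perfect matching, has exactly one bridge (through an analysis of the tree of maximal 2-edge-connected subgraphs, using 3-connectivity and $|\nabla(\{u,v\})|=4$), and then observes that the expansion inserts a path $v_1u_1u_2v_2$ that destroys all bridges, so $G^{\triangle}(u)-u_3-v$ is bridgeless and by Kotzig cannot have a unique perfect matching. You instead exhibit the two perfect matchings through $e'$ explicitly: $M_1$ by lifting a matching of $G$ through $uv$ and adding the opposite triangle edge $u_2u_3$, and $M_2$ by sending $u_2,u_3$ out to $y_1,y_2$ and invoking bicriticality of the brick to match $G-\{u,v,y_1,y_2\}$ (the observation that $uv$ is forced into any perfect matching of $G-y_1-y_2$ because $u$ becomes a degree-one vertex is the right trick, and it uses that $G$ is cubic and, being a 3-connected cubic brick on at least four vertices, simple, so $v,y_1,y_2$ are distinct). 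Your argument is more elementary --- it avoids Kotzig's theorem and the bridge/block-tree analysis entirely --- and, as you note, it proves the stronger statement that the corresponding edge in $G^{\triangle}(u)$ of \emph{any} edge $uv$ of a cubic brick fails to be forcing; the paper's route, by contrast, genuinely uses the forcing hypothesis to get started. The one thing the paper's proof buys that yours does not is the structural information about $G-u-v$ (exactly one bridge), but that information is not used elsewhere, so nothing is lost.
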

\begin{proof}
   Since $G-u-v$ has a unique perfect matching $M$,
   $G-u-v$ contains at least one bridge by Theorem \ref{BE}.
   Contracting every maximal 2-edge-connected subgraph in $G-u-v$ to a vertex respectively,
   the result graph is a tree, say $H$.
   Let $G'$ and $G''$ be two subgraphs of $G$ that  corresponds to
  two leaves of $H$.
  Since $G$ is 3-connected,
  each of $u$ and $v$ has one neighbour in $G'$ and one neighbour in $G''$ respectively.
  As $G$ is cubic and $uv\in E(G)$,
  $|\nabla(\{u,v\})|=4$. Then
  $\nabla(\{u,v\})$ is exactly the set of edges between $ V(G')\cup V(G'')$ and $\{u,v\}$. Therefore,
  $H$ has no vertices of degree two by $G$ is 3-connected again.
  So $H$ is $K_2$, i.e. $G-u-v$ has exactly one bridge.

  \begin{figure}[ht]
   \centering
   \includegraphics[scale=0.3]{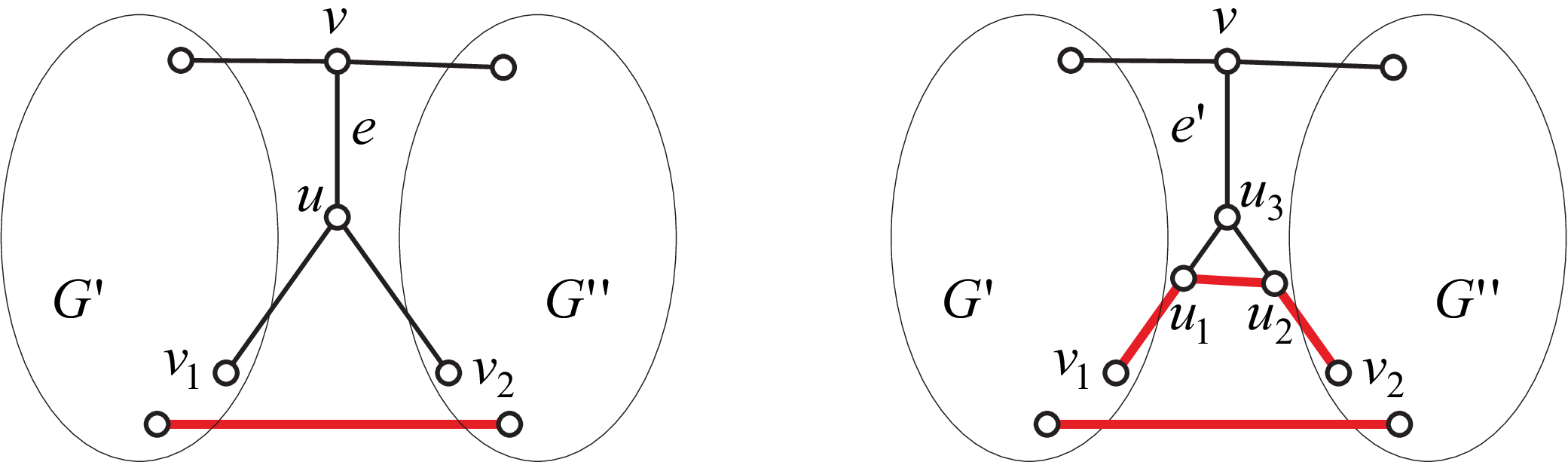}
   \caption{\label{f1-1} Illustration for the proof of Lemma \ref{cla2}.}
\end{figure}
   Note that
   for any pair of vertices in $G'$ or $G''$,
   there are two edge-disjoint paths connecting them in $G'$ or $G''$ as they are 2-edge-connected.
   Let $T=u_1u_2u_3u_1$ be the triangle in $G^{\triangle}(u)$ corresponding to $u$ and
   $u_3v$ be the edge in $G^{\triangle}(u)$ corresponding to $uv$, i.e. $e'=u_3v$.
   Since $u$ has two neighbours $v_1$ and $v_2$ distinct from $v$, we can assume
   $v_1\in V(G')$ and $v_2\in V(G'')$ such that
   $\{u_1v_1, u_2v_2\}\subset G^{\triangle}(u)$.
   Then $G^{\triangle}(u)-u_3-v$ can be obtained from $G-u-v$ by adding a   path
   $v_1u_1u_2v_2$  connecting
   $G'$ and $G''$.
   So for any pair of vertices in $G^{\triangle}(u)-u_3-v$,
   there are two edge-disjoint paths between them.
   Thus $G^{\triangle}(u)-u_3-v$ has no bridges.
   By Theorem \ref{BE}, $e'=u_3v$ is not a forcing edge of $G^{\triangle}(u)$.
\end{proof}

By Lemmas \ref{lem-f} and \ref{cla2},  we have $Y \rightarrow \triangle$-operations keep
the number of forcing edges from increasing. More exactly, we have the following corollary.
\begin{Cor}
   \label{Y-Op}
   Let $G$ be a  cubic brick.
   Then for any forcing edge $e$ of $G^{\triangle}$,
   there is exactly one forcing edge of $G$ corresponding to $e$.
   Therefore, the number of forcing edges of $G^{\triangle}$ is no more
   than the one of $G$.
\end{Cor}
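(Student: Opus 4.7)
The plan is to exhibit, for each forcing edge of $G^{\triangle}$, a unique forcing edge of $G$ that can be declared its ``correspondent'', and to arrange matters so that the whole assignment is injective. Let $x$ be the vertex replaced by the triangle $T=x_1x_2x_3x_1$, set $N_G(x)=\{y_1,y_2,y_3\}$, and recall that $x_iy_i$ is the spoke of $G^{\triangle}$ corresponding to $xy_i$. Every edge of $G^{\triangle}$ then falls into one of three types: (i) a triangle edge $x_ix_j$, (ii) a spoke $x_iy_i$, or (iii) an edge of $G$ not incident to $x$.

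Types (ii) and (iii) will be dispatched directly by the two preceding lemmas. If $e$ is a type-(iii) forcing edge of $G^{\triangle}$, then the contrapositive of Lemma~\ref{lem-f} (with $e$ as its own correspondent) makes $e$ forcing in $G$; this is the correspondent. Type-(ii) edges, I claim, cannot be forcing at all: were $x_iy_i$ forcing in $G^{\triangle}$, the contrapositive of Lemma~\ref{lem-f} would force $xy_i$ to be forcing in $G$, and then Lemma~\ref{cla2} would immediately make $x_iy_i$ nonforcing in $G^{\triangle}$, a contradiction.

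The step the two lemmas do not cover, and which I expect to be the main obstacle, is type (i). Take a forcing triangle edge $e=x_1x_2$; then $G^{\triangle}-x_1-x_2$ has a unique perfect matching $M$. Since $y_3$ is the only surviving neighbour of $x_3$ in that graph, we must have $x_3y_3\in M$. The map $M\mapsto M\setminus\{x_3y_3\}$ is then a bijection between perfect matchings of $G^{\triangle}-x_1-x_2$ and perfect matchings of $G^{\triangle}-\{x_1,x_2,x_3,y_3\}=G-x-y_3$, with inverse $M'\mapsto M'\cup\{x_3y_3\}$. Transporting uniqueness across the bijection makes $G-x-y_3$ uniquely matchable, so $xy_3$ is a forcing edge of $G$; this is declared the correspondent of $x_1x_2$, and symmetrically for $x_2x_3\mapsto xy_1$ and $x_1x_3\mapsto xy_2$.

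Injectivity then follows by inspection. Type-(iii) correspondents are edges of $G$ not incident to $x$, whereas type-(i) correspondents are spokes at $x$, so the two families of images are disjoint; within each family the assignment is manifestly one-to-one. Hence every forcing edge of $G^{\triangle}$ is matched with a unique forcing edge of $G$, and the stated inequality on the number of forcing edges is immediate.
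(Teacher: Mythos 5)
Your proposal is correct and follows essentially the same route as the paper: Lemma~\ref{lem-f} (in contrapositive) handles edges outside the replacement-triangle, the forced edge $x_3y_3$ gives the explicit correspondence $x_1x_2\mapsto xy_3$ for triangle edges, and Lemma~\ref{cla2} underlies the injectivity. Your explicit observation that spokes $x_iy_i$ can never be forcing is a slightly cleaner way of packaging the paper's final appeal to Lemma~\ref{cla2}, but the substance is identical.
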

\begin{proof}
   Let $T=x_1x_2x_3x_1$ be the replacement-triangle of $G^{\triangle}$ corresponding to the vertex $x$ of $G$.
   If $e\notin E(T)$, then by Lemma \ref{lem-f},
   its corresponding edge $e'$ must be a forcing edge of $G$.
   Now we   assume, without loss of generality, that $e=x_1x_2$.
   Let $N_G(x)=\{y_1,y_2,y_3\}$. Then we claim that $e'=xy_3$
   is a forcing edge of $G$.
   Let $M$ be the unique perfect matching of $G^{\triangle}$ containing $e$.
   Then $x_3y_3$ belongs to $M$.
   Further $(M\setminus \{e,x_3y_3\})\cup \{e'\}$ is the unique perfect matching of $G$ containing $e'$.
   So $e'$ is the unique forcing edge in $G$ corresponding to $e$.
   By Lemma \ref{cla2}, $x_3y_3$ is not a forcing edge of $G^{\triangle}$.
   So $G^{\triangle}$ has equal or less forcing edges than $G$.
\end{proof}

It can be checked that $K_4$ has exactly six forcing edges,
 every cubic brick contains at most six forcing edges by Theorem \ref{forcing-edges} and Corollary \ref{Y-Op}.
Let $R_0$ be the graph obtained  by replacing each vertex of $K_4$ by a triangle (see Fig. \ref{f2}). Moreover, we have the following proposition.

\begin{figure}[ht]
   \centering
   \includegraphics[scale=0.35]{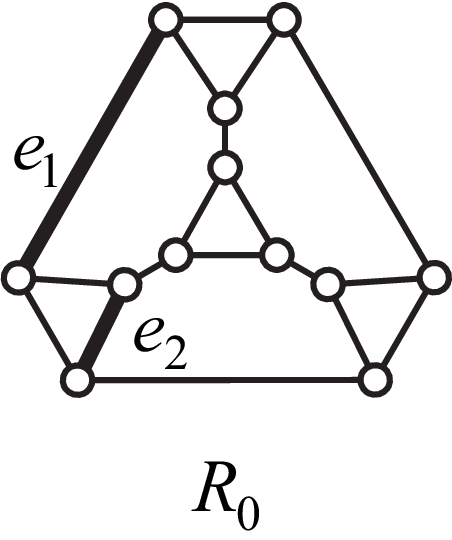}
   \caption{\label{f2} Illustration for Proposition \ref{pro1}.}
\end{figure}
\begin{Pro}
   \label{pro1}
   The graph $R_0$ contains no forcing edges. Furthermore,
   all graphs with $R_0$ as a base have no forcing edges.
\end{Pro}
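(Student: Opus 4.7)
My plan is to handle the two assertions of the proposition separately: the first (that $R_0$ itself has no forcing edges) reduces via symmetry to a small explicit check, while the second (that no graph having $R_0$ as a base contains a forcing edge) is immediate by iterating Corollary~\ref{Y-Op}.

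To set up the first part, I would label the four replacement triangles of $R_0$ as $T_1,T_2,T_3,T_4$, one for each vertex of $K_4$, and for $i\neq j$ write $v_{ij}$ for the vertex of $T_i$ joined to $v_{ji}\in T_j$. The natural $S_4$-action on $\{1,2,3,4\}$ lifts to automorphisms of $R_0$, under which the twelve triangle edges form one orbit and the six cross edges $v_{ij}v_{ji}$ form another. So it suffices to exhibit two distinct perfect matchings containing a single representative edge of each orbit. For the triangle edge $e=v_{12}v_{13}$, the condition $e\in M$ forces $v_{14}v_{41}\in M$, and one can then display two different completions such as
\[
M_1=\{e,\,v_{14}v_{41},\,v_{42}v_{43},\,v_{21}v_{24},\,v_{23}v_{32},\,v_{31}v_{34}\},
\]
\[
M_2=\{e,\,v_{14}v_{41},\,v_{42}v_{24},\,v_{43}v_{34},\,v_{21}v_{23},\,v_{31}v_{32}\}.
\]
For the cross edge $e'=v_{12}v_{21}$, the analogous display uses the all-cross-edge matching $\{v_{ij}v_{ji}:i<j\}$ together with $\{v_{12}v_{21},\,v_{34}v_{43},\,v_{13}v_{14},\,v_{23}v_{24},\,v_{31}v_{32},\,v_{41}v_{42}\}$.

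For the second part, any graph $G$ with $R_0$ as a base arises from $R_0$ by a finite sequence of $Y\to\triangle$-operations; Corollary~\ref{Y-Op} ensures each such step on a cubic brick can only preserve or decrease the number of forcing edges, so iterating from the value $0$ at $R_0$ keeps us at $0$. The main, and really only, obstacle is the notational bookkeeping in the perfect-matching enumeration for $R_0$; once a clean labelling and the orbit reduction under $S_4$ are in place, the exhibited matchings are routine to verify, and the propagation step via Corollary~\ref{Y-Op} is entirely mechanical.
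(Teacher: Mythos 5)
Your proof is correct, and the second half (iterating Corollary~\ref{Y-Op} from the value $0$ at $R_0$, which is legitimate since every intermediate graph is a cubic brick by Theorem~\ref{brick-expand}) is exactly what the paper does. For the first half you take a more explicit route than the paper. Both arguments begin by using symmetry to cut the check down to one representative triangle edge and one representative cross edge, but where you then write down two perfect matchings through each representative by hand (your matchings do check out: each of the four displayed sets covers all twelve vertices $v_{ij}$ exactly once, and the observation that $v_{12}v_{13}\in M$ forces $v_{14}v_{41}\in M$ is correct), the paper avoids any enumeration by choosing vertex-disjoint representatives $e_1$ and $e_2$ of the two orbits, noting that $R_0-V(e_1)-V(e_2)$ is bridgeless, and invoking Kotzig's theorem (Theorem~\ref{BE}) to conclude that this graph has at least two perfect matchings, each of which extends by $\{e_1,e_2\}$ to kill both representatives at once. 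The paper's version is shorter and dodges the bookkeeping you flag as the main obstacle; yours is more self-contained and verifiable at a glance, at the cost of having to certify four explicit matchings and the $S_4$-orbit structure. Either is acceptable.
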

\begin{proof}
   As shown in Fig. \ref{f2}, we just need to show
   that neither $e_1$ nor $e_2$ is a forcing edge of
   $R_0$ by symmetry.
   Since $R_0-V(e_2)-V(e_1)$ has no bridges, $R_0-V(e_2)-V(e_1)$ contains at least two
   perfect matchings, say $M_1$ and $M_2$, by Theorem \ref{BE}.
   Then $M_1\cup \{e_1,e_2\}$ and $M_2\cup \{e_1,e_2\}$ are two different perfect
   matchings of $R_0$ containing $e_1$ and $e_2$. So
   both $e_1$ and $e_2$ are not forcing edges of $R_0$. By Corollary \ref{Y-Op} repeatedly, those graphs with $R_0$ as a base have no forcing edges.
\end{proof}

Let $x$ be a vertex of degree 4 in a graph $G$, and
$y_1$, $y_2$, $y_3$ and $y_4$ be its neighbours, where $y_4$ may be the same vertex as $y_1$.
A \emph{generalized $Y \rightarrow \triangle$-operation} of $G$ on $x$ means to replace
$x$ by a triangle $x_1x_2x_3x_1$, join $x_i$ to $y_i$ for $i=1,2$,
and join $x_3$ to $y_3$ and $y_4$ (see Fig. \ref{Af1}). 
We call $x_1x_2x_3x_1$ the replacement-triangle corresponding to the 
vertex $x$.

\begin{figure}[ht]
   \centering
   \includegraphics[scale=0.35]{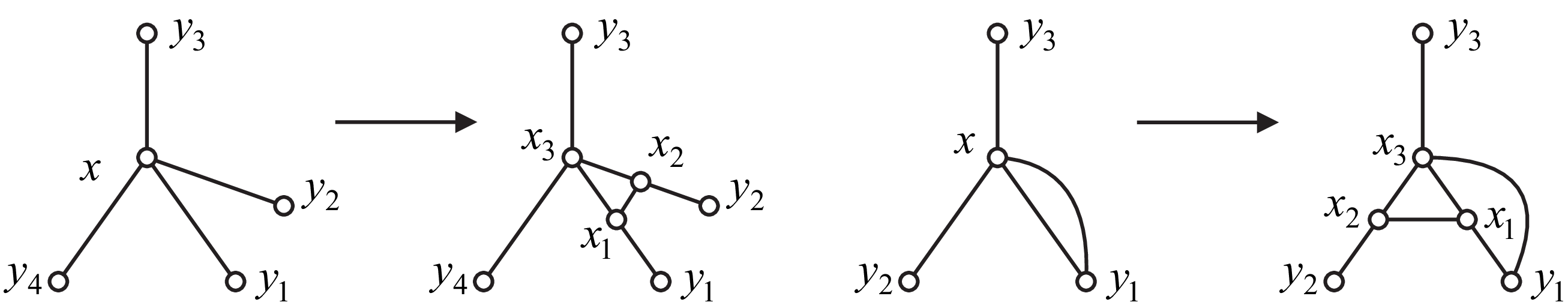}
   \caption{\label{Af1}Performing a generalized $Y \rightarrow \triangle$-operation
   on $x$.}
\end{figure}
\begin{Lem}
   \label{GY}
   Let $G$ be a brick with a vertex $x$ of degree 4.
   If $G'$ can be obtained from $G$ by a generalized $Y \rightarrow \triangle$-operation on $x$.
   Then $G'$ is also a brick.
\end{Lem}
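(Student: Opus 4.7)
My plan is to invoke the characterization of Edmonds et al.~\cite{Edmonds1982} quoted in the introduction: a graph is a brick if and only if it is $3$-connected and bicritical. Since $G'$ contains the triangle $T=x_1x_2x_3x_1$, it is automatically nonbipartite; and once bicriticality is established, matching-coveredness is free, because for any edge $uv\in E(G')$ a perfect matching of $G'-u-v$ together with $uv$ is a perfect matching of $G'$ containing $uv$. So it suffices to prove that $G'$ is bicritical and $3$-connected.

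For bicriticality I would split into three cases according to how a pair $\{u,v\}$ of vertices of $G'$ meets $V(T)$. When $\{u,v\}\cap V(T)=\emptyset$, take a perfect matching $M$ of $G-u-v$ delivered by bicriticality of $G$; since $M$ uses exactly one edge $xy_i$ at $x$, I produce a perfect matching of $G'-u-v$ by deleting $xy_i$ from $M$ and inserting its counterpart in $G'$ (namely $x_iy_i$ if $i\in\{1,2\}$, or $x_3y_i$ if $i\in\{3,4\}$) together with the triangle edge joining the two remaining vertices of $T$. When $|\{u,v\}\cap V(T)|=1$, say $u=x_i$, take a perfect matching of $G-x-v$ and append the unique triangle edge covering $V(T)\setminus\{x_i\}$. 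When $\{u,v\}\subseteq V(T)$, match the remaining triangle vertex $x_k$ to one of its external neighbours $y_j$ and append a perfect matching of $G-x-y_j$ given again by bicriticality of $G$.

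For $3$-connectedness, let $S$ be a vertex cut of $G'$ with $|S|\le 2$. If $S\cap V(T)=\emptyset$, then $T$ is connected and disjoint from $S$, so it lies in a single component of $G'-S$, whence contracting $T$ yields a disconnection of $(G'-S)/T=G-S$, contradicting the $3$-connectedness of $G$. If $|S\cap V(T)|=2$, then $G'-S$ consists of $G-x$ (which is $2$-connected because $G$ is $3$-connected on at least $4$ vertices) together with the surviving triangle vertex, which is adjacent to at least one $y_j\in V(G-x)$, so $G'-S$ is connected. The delicate case is $S=\{x_i,s\}$ with $s\in V(G)\setminus\{x\}$: here $G'-S$ contains $G-x-s$, connected because $\{x,s\}$ is not a cut of $G$, and I must check that the two surviving triangle vertices still reach $G-x-s$. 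Using that $y_1,y_2,y_3$ are distinct neighbours of $x$ and that $y_4$ is either equal to $y_1$ or distinct from all three, the external tail-ends of the two surviving triangle vertices are pairwise distinct, hence at most one of them equals $s$; a brief subcase check on which $y$ coincides with $s$ shows that every surviving triangle vertex reaches $G-x-s$, either directly through an $x_jy$-edge or via the triangle edge between the two surviving vertices. The main obstacle is precisely this last subcase, where the possible coincidence $y_4=y_1$ and the arbitrary choice of $s$ have to be handled together, but the pairwise distinctness of the tail-ends keeps the case analysis short and uniform.
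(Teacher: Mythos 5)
Your proposal is correct and takes essentially the same route as the paper: both use the Edmonds--Lov\'asz--Pulleyblank characterization of bricks as $3$-connected bicritical graphs and run the same case analysis on how the deleted pair meets the replacement-triangle, lifting perfect matchings of $G$ minus a suitable pair to $G'$ by adding one triangle edge or one pendant edge $x_ry_k$. The only cosmetic difference is that the paper verifies connectivity and the existence of a perfect matching of $G'-S'$ in a single pass over the $2$-subsets $S'$ (treating the surviving triangle vertices as one connected blob with at least one edge into $G-S$), whereas you separate the two checks; the content is the same.
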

\begin{proof}
   Let $S'$ be a vertex subset of $G'$ with size two, and $T=x_1x_2x_3x_1$ be the replacement-triangle of $G'$ corresponding to $x$.
   If $S'\cap \{x_1,x_2,x_3\}=\emptyset$, then $S'$ is a vertex subset of $G$.
   Since $G$ is 3-connected bicritical, $G-S'$ is connected and has a perfect matching $M$.
   In this case, $G'-S'$ can be obtained from $G-S'$ by replacing $x$ by $T$,
   $G'-S'$ is also connected and has a perfect matching containing exactly one edge in
   $E(T)$.

   If $|S'\cap \{x_1,x_2,x_3\}|=1$, then let $S=(S'\setminus V(T))\cup \{x\}$.
   Note that $G'-S'$ can be obtained from $G-S$ by adding
   two vertices from $T$, say $x_i$ and $x_j$, and all edges incident with them.
   As shown in Fig. \ref{Af1}, $x_ix_j\in E(G')$ and $|N(\{x_i,x_j\})\cap (V(G)\setminus S)|\geq 1$.
   Since $|S|=2$, $G-S$ is connected and has a perfect matching $M_1$,
   $G'-S'$ is also connected and has a perfect matching $M_1\cup \{x_ix_j\}$.

   If $|S'\cap \{x_1,x_2,x_3\}|=2$, then let $x_r$ be the unique vertex in $V(T)\setminus S'$.
   Surely, $x_ry_k\in E(G'-S')$ for an integer $k\in \{1, 2,3,4\}$.
   Since $G-x$ is connected, $G'-V(T)$ is also connected.
   Since $x_r$ is adjacent to $y_k$ in $G'-V(T)$,
   $G'-S'$ is also connected.
   Since $G-x-y_k$ has a perfect matching $M_2$, $M_2\cup \{x_ry_k\}$ is a perfect
   matching of $G'-S'$.
   From above discussion, we know that $G'$ is also 3-connected  and bicritical, i.e.
   $G'$ is a brick.
\end{proof}
We are going to find those cubic bricks with the property that all $b$-invariant edges are forcing.
By Theorem \ref{ex-b}, we consider those
cubic bricks with at least one forcing edge and at most six $b$-invariant edges.
To find $b$-invariant edges in a cubic brick,
we will introduce a \emph{pyramid},
which is gotten from a triangle $xyzx$ by inserting a new 2-degree vertex on
$xy$ and $xz$ respectively, and joining the two new 2-degree vertices by an edge.
We call the edge $yz$ the \emph{bottom edge} of
the pyramid.
For example, the subgraph induced by $\{u_0,u_1,\ldots, u_4\}$ in Fig. \ref{f3} is a
pyramid, where $u_3u_4$ is the bottom edge.

\begin{figure}[ht]
   \centering
   \includegraphics[scale=0.25]{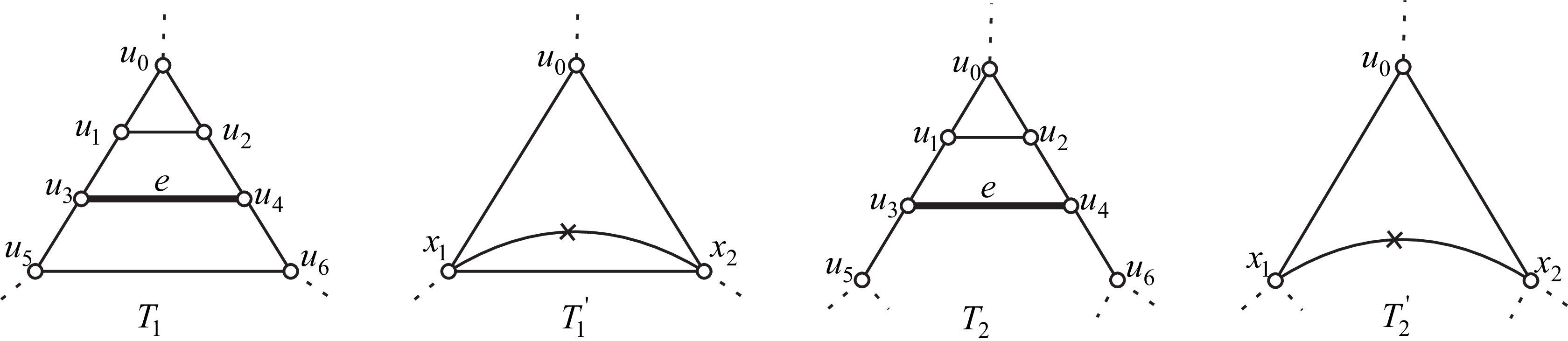}
   \caption{\label{f3}Illustration for Lemma \ref{lem4}: $T_i'=((T_i-u_3u_4)/X_1\rightarrow x_1)/X_2\rightarrow x_2$, where $X_1=\{u_1,u_3,u_5\}$, $X_2=\{u_2,u_4,u_6\}$ and  $i=1,2$.}
\end{figure}
\begin{Lem}
   \label{lem4}
   Assume that $G$ contains $K_4$ as a base,  $|V(G)|\geq 8$ and
   $G$ contains a subgraph $H$ that is isomorphic to a pyramid.
   Then the bottom edge $e$ of $H$ is a $b$-invariant edge of $G$.
   Further, if $G^*$ contains $G$ as a base and both end vertices of $e$ lie in  $G^*$,
  then $e$ is also a $b$-invariant edge of $G^*$.
\end{Lem}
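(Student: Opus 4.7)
The plan is to show $e=u_3u_4$ is a $b$-invariant edge of $G$ by constructing an explicit tight cut decomposition of $G-e$ that yields exactly one brick. Label the pyramid $H$ so that $u_0$ is the apex, $u_1,u_2$ the middle vertices, and $u_3,u_4$ the bottom vertices; since $G$ is cubic, each of $u_0,u_3,u_4$ has a unique external neighbour, which we denote $u_7,u_5,u_6$ respectively. Set $X_1=\{u_1,u_3,u_5\}$ and $X_2=\{u_2,u_4,u_6\}$ as in Fig.~\ref{f3}.

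The first step is to verify that $\nabla(X_1)$ and $\nabla(X_2)$ are tight cuts of $G-e$. In $G-e$ the vertex $u_3$ has both of its remaining neighbours $u_1, u_5$ inside $X_1$ (the edge $e$ having been removed), so $u_3$ must be matched internally in every perfect matching $M$ of $G-e$; together with the parity constraint that $|M\cap\nabla(X_1)|$ is odd (from $|X_1|=3$), this forces $|M\cap\nabla(X_1)|=1$. The same argument applies to $\nabla(X_2)$. A direct computation then shows that the two ``small sides'' of the iterated decomposition are four-vertex bipartite matching covered multigraphs (each with a bipartition of the form $\{u_1,u_5\}\mid\{u_3,y_1\}$), hence braces contributing $0$ to the brick count.

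The main obstacle is to show that the ``big side'' $T':=(G-e)/X_1/X_2$ is a brick. Note that $T'$ contains a triangle $u_0x_1x_2$ in which $u_0$ is cubic but $x_1,x_2$ each have degree $4$, having absorbed $u_5,u_6$ respectively through the contractions. To show $T'$ is a brick I would identify it with a graph derived from the cubic brick $G_2:=G/V(H)$. First, $G_2$ is cubic by Proposition~\ref{cor1} applied twice: once to $G$ with the top triangle $\{u_0,u_1,u_2\}$ (yielding a cubic brick $G_1=G/\{u_0,u_1,u_2\}$, in which the image of $H$ becomes a triangle $\{t,u_3,u_4\}$), and once to $G_1$ with this new triangle. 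One then checks that $T'$ equals $(G_1-u_3u_4)/\{u_3,u_5\}/\{u_4,u_6\}$, equivalently the result of performing a $Y\rightarrow\triangle$-operation on the super-vertex $s$ of $G_2$ (reproducing $G_1$) and then identifying each of $u_3,u_4$ with its external neighbour $u_5,u_6$. The brick property of $T'$ can then be obtained by verifying $3$-connectedness and bicriticality directly from those of $G_2$, or alternatively by invoking Lemma~\ref{GY} to handle the two degree-$4$ vertices produced. This gives $b(T')=1$, and combining via the tight cut decomposition theorem yields $b(G-e)=1=b(G)$ with $G-e$ matching covered, so $e$ is $b$-invariant in $G$.

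For the ``further'' statement, $G^*$ is obtained from $G$ by a sequence of $Y\rightarrow\triangle$-operations none of which touches $u_3$ or $u_4$, so $e$ persists in $G^*$. I would argue by induction on the length of this sequence: the inductive step observes that if $H$ is a matching covered graph with $b(H)=1$ and $v$ is a degree-$3$ vertex of $H$ not incident to $e$, then $H^{\triangle}(v)-e=(H-e)^{\triangle}(v)$ remains matching covered with $b=1$, since the $Y\rightarrow\triangle$-operation on $v$ commutes with the tight cut decomposition of $H-e$ and preserves the type (brick or brace) of the piece containing $v$, using Theorem~\ref{brick-expand} and its bipartite analogue. Applying this iteratively from $H=G-e$ establishes $b(G^*-e)=1=b(G^*)$, so $e$ is $b$-invariant in $G^*$ as well.
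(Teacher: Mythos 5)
Your treatment of the first assertion follows the paper's own route: the same sets $X_1=\{u_1,u_3,u_5\}$, $X_2=\{u_2,u_4,u_6\}$, the same parity argument for tightness, the same identification of the two small sides as braces, and the same reduction of the big side to the cubic brick obtained by contracting the pyramid, via Proposition~\ref{cor1}. One slip there: $(G-e)/X_1/X_2$ is \emph{not} equal to $(G_1-u_3u_4)/\{u_3,u_5\}/\{u_4,u_6\}$ --- the former retains an edge $x_1x_2$ coming from $u_1u_2$, while in the latter that edge is swallowed when the top triangle is contracted to $t$. The correct statement is that the big side is the cubic brick $G/V(H)$ \emph{plus one extra edge} joining the two degree-$4$ vertices, after which one needs the (easy) observation that adding an edge to a brick yields a brick; this is exactly how the paper closes the step, whereas your ``verify $3$-connectedness and bicriticality directly'' is left undone. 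You also omit the check $u_5\neq u_6$, which is needed for $X_1\cap X_2=\emptyset$ and which the paper derives from $|V(G)|\geq 8$ and $3$-connectivity.

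The genuine gap is in the ``further'' statement. Your inductive step asserts that a $Y\rightarrow\triangle$-operation on a degree-$3$ vertex $v$ not incident to $e$ ``commutes with the tight cut decomposition and preserves the type (brick or brace) of the piece containing $v$,'' invoking a ``bipartite analogue'' of Theorem~\ref{brick-expand}. No such analogue exists: inserting a triangle into a brace destroys bipartiteness, so whenever $v$ lies in one of the two small brace pieces --- i.e.\ $v\in\{u_1,u_2,u_5,u_6\}$, or at a later stage of the induction any vertex adjacent to $u_3$ or $u_4$ --- the type of that piece cannot be preserved, and the decomposition does not commute with the operation. What actually happens in that case is that the tight cut \emph{moves}: the new cut is $\nabla(\{u_1',u_3,u_5\})$ with $u_1'$ the new neighbour of $u_3$, the small side is again a four-vertex brace, and the new brick piece is obtained from the old one by a \emph{generalized} $Y\rightarrow\triangle$-operation on one of its degree-$4$ vertices $x_1,x_2$. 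This is precisely why the paper introduces the generalized operation and proves Lemma~\ref{GY}; your proposal never engages with this case, and since the hypothesis only requires $u_3,u_4\in V(G^*)$, operations on their neighbours are genuinely in scope, so the induction as stated does not go through.
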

\begin{proof}
   As shown in Fig. \ref{f3}, we label all vertices of $H$ by $u_0,u_1,\ldots,u_4$,
   where $e=u_3u_4$ is the bottom edge of $H$.
   Obviously, the deletion of $e$ from $G$ will produce two
   2-degree vertices $u_3$ and $u_4$ in $G-e$.
   Let $u_5\in N(u_3)\setminus\{u_1,u_4\}$ and  $u_6\in N(u_4)\setminus\{u_2,u_3\}$.
   If $u_5=u_6$, then from $|V(G)|\geq 8$,
   $\{u_0,u_5\}$ is a 2-vertex cut of $G$, a contradiction.
   So $u_5\neq u_6$.
   Let $X_1=\{u_1,u_3,u_5\}$ and $X_2=\{u_2,u_4,u_6\}$. Then
   $X_1\cap X_2=\emptyset$.

   For any perfect matching $M$ of $G-e$, since $|X_1|=|X_2|=3$,
   $|M\cap (\nabla(X_i)\setminus \{e\})|$ is odd for $i=1,2$.
   Since $\nabla(X_1)\setminus \{e\}$ is covered by $u_1$ and $u_5$
   and $\nabla(X_2)\setminus \{e\}$ is covered by $u_2$ and $u_6$,
   $|M\cap \nabla(X_i)|=1$.
   So $\nabla(X_1)\setminus \{e\}$ and $\nabla(X_2)\setminus \{e\}$ are both nontrivial tight cuts of $G-e$.
   Since $(G-e)/\overline{X_1}$ and $(G-e)/\overline{X_2}$ are both isomorphic to a 4-cycle
   with two multiple edges, they are both matching covered bipartite graphs.
   Let $G'=((G-e)/X_1\rightarrow x_1)/X_2\rightarrow x_2$ (may be a multiple graph).
   Then we will show that $G'$ is a brick.

As $u_1u_2\in E(G)$, $x_1x_2\in E(G')$.
   Let $G''$ be the graph obtained from $G'$ by deleting exactly one edge joining
   $x_1$ and $x_2$ (if $u_5u_6\in E(G)$, then there exist two edges joining $x_1$ and $x_2$ in $G'$).
   Then $G''$ can be seen as a cubic graph obtained from $G$ by twice triangle contractions, i.e.
   $G''\cong (G/\{u_0,u_1,u_2\}\rightarrow u)/\{u,u_3,u_4\}$.
   Since $G$ contains $K_4$ as a base, $G''$ is also a cubic brick by Proposition \ref{cor1}.
   Since $G'$ can be obtained from $G''$ by adding exactly one edge $x_1x_2$, $G'$ is also a brick. 
   Since $G'$ is the only brick obtained from $G-e$ by contracting two nontrivial tight cuts,
   $b(G-e)=1$, which implies that $e$ is a $b$-invariant edge of $G$.

   Next we show that $e=u_3u_4$ is also a $b$-invariant edge of $G^*$.
   Similar to the above, let $X_1'$ (resp. $X_2'$) be the vertex subset formed by
   $u_3$ (resp. $u_4$) and its two neighbours in $G^*-u_3u_4$.
   Then $\nabla(X_1')$ and $\nabla(X_2')$ are also both nontrivial tight cuts of
   $G^*-e$.
   Since $u_3$ and $u_4$ belong to $V(G^*)$,
   $(G^*-e)/\overline{X_1'}$ and $(G^*-e)/\overline{X_2'}$ are also
   matching covered bipartite graphs.

   Since $G$ is a base of $G^*$, $G-e$ is a base of $G^*-e$.
   If $X_1=X_1'$ and $X_2=X_2'$, then $((G-e)/X_1)/X_2$ is a base of $((G^*-e)/X_1')/X_2'$.
   If $X_1\neq X_1'$ or $X_2\neq X_2'$, then at least one vertex of $\{u_1,u_2,u_5,u_6\}$
   is not a vertex of $G^*$.
   In the graph $((G-e)/X_1\rightarrow x_1)/X_2\rightarrow x_2$, both $x_1$ and $x_2$ have degree four.

   \begin{figure}[ht]
      \centering
      \includegraphics[scale=0.3]{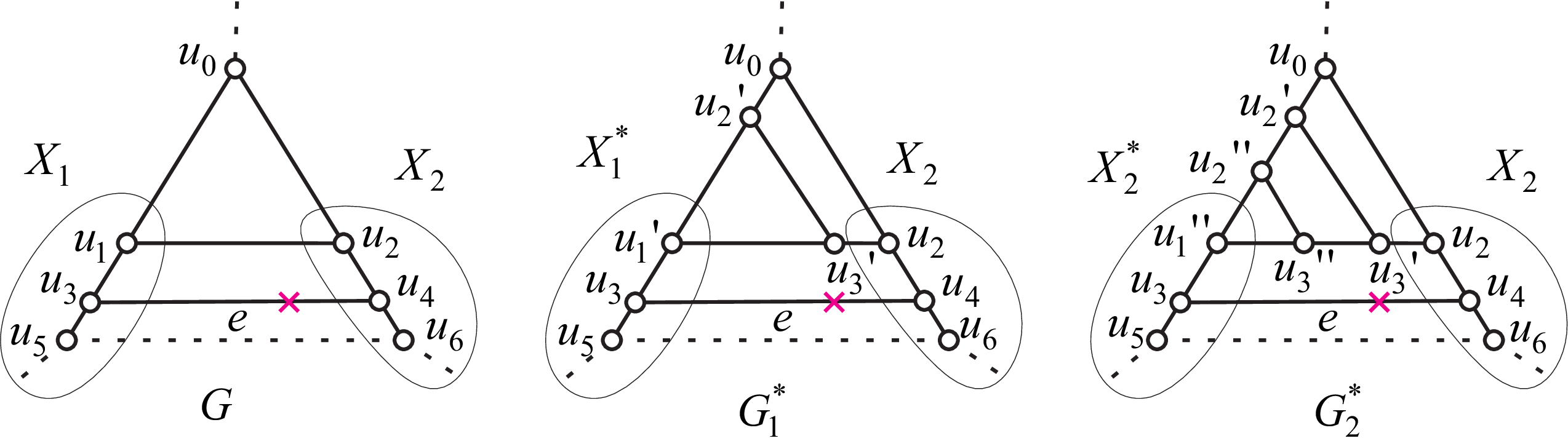}
      \caption{\label{f3-1}Illustration for $b(G^*-e)=1$.}
   \end{figure}

   Let $G_1^*=G^{\triangle}(u_1)$ and $X_1^*=\{u_1',u_3,u_5\}$,
   where the edge $u_1'u_3$ in $G_1^*$ corresponds to $u_1u_3$ in $G$.
   Then $(G_1^*-e)/X_1^*/X_2$ can be obtained from
   $((G-e)/X_1\rightarrow x_1)/X_2$ via a generalized $Y\rightarrow \triangle$-operation
   on $x_1$ (see Fig. \ref{f3-1}).
   Let $G_2^*=(G_1^*)^{\triangle}(u_1')$ and $X_2^*=\{u_1'',u_3,u_5\}$,
   where the edge $u_1''u_3$ in $G_2^*$ corresponds to $u_1'u_3$ in $G_1^*$.
   Then $(G_2^*-e)/X_2^*/X_2$ can be obtained from
   $(G_1^*-e)/X_1^*\rightarrow x_1')/X_2$ via a generalized
   $Y\rightarrow \triangle$-operation on $x_1'$.

From above discussion, we know that a  $Y\rightarrow \triangle$-operation on a vertex adjacent to $u_3$ (resp. $u_4$) corresponds to a generalized $Y\rightarrow \triangle$-operation on a 
vertex of degree 4; a  $Y\rightarrow \triangle$-operation on a vertex not adjacent to $u_3$ (resp. $u_4$) corresponds to a $Y\rightarrow \triangle$-operation on a 
vertex of degree 3.
So  $((G^*-e)/X_1')/X_2'$ can be obtained from  $((G-e)/X_1)/X_2$ via
$Y\rightarrow \triangle$-operations on its vertices of degree 3, or via generalized $Y\rightarrow \triangle$-operations on its 
vertices of degree 4 repeatedly.
   Since $G'=((G-e)/X_1)/X_2$ is a brick,
   $((G^*-e)/X_1')/X_2'$ is also a brick by Theorem \ref{brick-expand} and Lemma \ref{GY}.
   So $e$ is a $b$-invariant edge of $G^*$.
\end{proof}

By Lemma \ref{lem-f} and Lemma \ref{lem4}, we can directly
get the following result.

\begin{Cor}
   \label{bf}
   Let $G$ be a graph containing $K_4$ as a base  and $|V(G)|\geq 8$, $H$ be a pyramid of $G$
   with the bottom edge $e$. Assume that a graph $G^*$ contains $G$ as a base and the two end vertices of
   $e$ lie in $G^*$.
   If $e$ is not a forcing edge of $G$, then $e$ is a $b$-invariant but not forcing edge of $G^*$.
\end{Cor}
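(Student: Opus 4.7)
The plan is to deduce Corollary \ref{bf} as a direct synthesis of Lemma \ref{lem4} and Lemma \ref{lem-f}. All hypotheses of the corollary (namely that $G$ has $K_4$ as a base, $|V(G)| \geq 8$, $e$ is the bottom edge of a pyramid $H \subseteq G$, $G^*$ contains $G$ as a base, and both endpoints of $e$ lie in $G^*$) match the hypotheses of the second half of Lemma \ref{lem4}, so I would first simply invoke Lemma \ref{lem4} to conclude that $e$ is a $b$-invariant edge of $G^*$.

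Next I would establish that $e$ is not a forcing edge of $G^*$ by induction on the number $k$ of $Y\rightarrow\triangle$-operations required to produce $G^*$ from $G$. For the base case $k = 0$, we have $G^* = G$ and the statement is exactly the hypothesis that $e$ is not forcing in $G$. For the inductive step, write the last operation so that $G^*$ is obtained from some intermediate graph $G_0$ by a single $Y\rightarrow\triangle$-operation, where $G_0$ is built from $G$ in $k-1$ operations. The induction hypothesis gives that $e$ is not forcing in $G_0$, and Theorem \ref{brick-expand} (applied iteratively along the chain starting from the cubic brick $G$) ensures that $G_0$ is a cubic brick; therefore Lemma \ref{lem-f} applies to $G_0$ and its $Y\rightarrow\triangle$-image $G^*$, yielding that the edge of $G^*$ corresponding to $e$ is not forcing.

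The one subtle point to verify along the way is that the ``corresponding edge'' at each intermediate stage is literally the same edge $e$ (between the same two vertices), not a transformed copy of it. This holds because the assumption that both endpoints of $e$ lie in $G^*$ forces every $Y\rightarrow\triangle$-operation in the series to act on a vertex distinct from the endpoints of $e$, so $e$ is transmitted unchanged through the entire sequence. Beyond this bookkeeping observation, I do not expect any substantive obstacle: the two halves of the corollary drop out cleanly from the two cited lemmas.
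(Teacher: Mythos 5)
Your proposal is correct and follows exactly the route the paper intends: the paper derives this corollary directly from Lemma \ref{lem4} (for $b$-invariance in $G^*$) and Lemma \ref{lem-f} applied iteratively along the chain of $Y\rightarrow\triangle$-operations (for non-forcingness), with Theorem \ref{brick-expand} guaranteeing each intermediate graph is a cubic brick. Your added bookkeeping observation---that the hypothesis that both endpoints of $e$ lie in $G^*$ ensures no operation is performed on an endpoint of $e$, so $e$ is carried through unchanged---is a correct and worthwhile clarification of a point the paper leaves implicit.
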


\begin{figure}[h]
   \centering
   \includegraphics[scale=0.35]{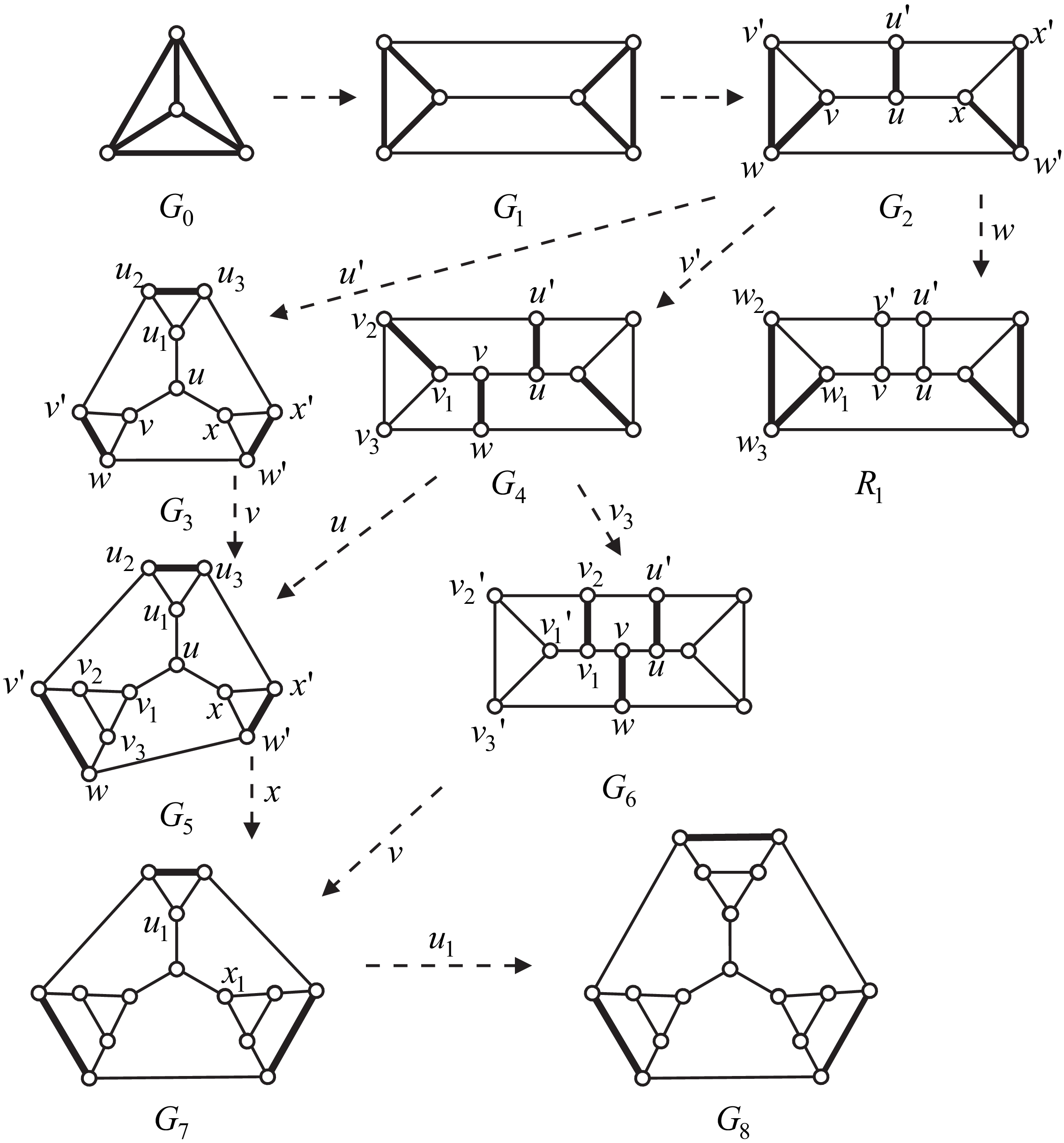}
   \caption{\label{f4}All possible bricks in Theorem \ref{th1}:
   those bold edges are their forcing edges.}
\end{figure}
\section{ Proof of Theorem \ref{th1}}

In the following, we first prove the sufficiency of Theorem \ref{th1}.

\textbf{Sufficiency.}
   As shown in Fig. \ref{f4}, it can be checked all bold edges of
   $G_i$ are forcing edges.
   For each $2\leq i\leq 8$, let
   $S_i$ be the union of those perfect matchings of $G_i$, each of which
   contains at least one forcing edge.
   Then we can check that $S_i=E(G_i)$.

   \begin{Cla}
      \label{cla6}
      Let $e$ be a forcing edge of a matching covered graph $G$, and $M$ be the unique perfect matching of $G$
      containing $e$. Then for any $e'\in M\setminus \{e\}$, $G-e'$ is not matching covered.
   \end{Cla}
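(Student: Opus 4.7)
The plan is a very short argument by contradiction, exploiting the definition of a forcing edge together with the definition of matching covered. Assume that some $e' \in M \setminus \{e\}$ has the property that $G - e'$ is matching covered. Since $e \neq e'$, the edge $e$ still belongs to $G - e'$, so by the matching covered hypothesis there exists a perfect matching $M'$ of $G - e'$ with $e \in M'$. Now view $M'$ as a subset of $E(G)$: it remains a perfect matching of $G$ containing $e$.

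The key observation is that $M'$ avoids $e'$ (by construction, $e' \notin E(G - e')$), while $M$ contains $e'$; hence $M \neq M'$. This produces two distinct perfect matchings of $G$ both containing $e$, contradicting the hypothesis that $e$ is a forcing edge with unique perfect matching $M$. Therefore no such $e'$ exists, i.e.\ $G - e'$ fails to be matching covered for every $e' \in M \setminus \{e\}$.

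I do not anticipate any obstacle here: the statement is essentially a direct unpacking of the definitions of ``forcing'' and ``matching covered,'' and the whole argument fits in a few lines. The only minor point worth stating explicitly in the write-up is why $e \in E(G - e')$, which is immediate from $e \neq e'$.
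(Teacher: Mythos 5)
Your proposal is correct and follows essentially the same argument as the paper: both derive from the hypothesis a perfect matching of $G$ containing $e$ but avoiding $e'$, which must differ from $M$ (since $e'\in M$), contradicting that $e$ is forcing. No issues.
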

   \begin{proof}
      If $G-e'-V(e)$ has a perfect matching $M'$, then $M'\cup \{e\}$ is a perfect matching of
      $G$. Since $M$ contains $e'$ but $M'\cup \{e\}$ does not, $M\neq M'\cup \{e\}$.
      Since both $M$ and $M'\cup \{e\}$ contain $e$, a contradiction
      to that $e$ is a forcing edge of $G$.
      So $e$ does not belong to any perfect matching of $G-e'$,
      and $G-e'$ is not matching covered.
   \end{proof}
   For each $2\leq i\leq 8$, let $e$ be a $b$-invariant edge of $G_i$.
   Then $e$ is removable, i.e. $G_i-e$ is matching covered graph.
   Let $F_i$ be the set of all forcing edges of $G_i$.
   By Claim \ref{cla6}, $e\in E(G_i)\setminus (S_i\setminus F_i)=(E(G_i)\setminus S_i)\cup F_i=F_i$, i.e.
   $e$ must be a forcing edge of $G_i$.
   Thus all $b$-invariant edges of $G_i$ are its forcing edges,
   and this completes the proof of sufficiency.

A \emph{5-wheel} $W_5$ is a graph formed by connecting a single vertex $x$ to each vertex of
a 5-cycle. Note that $W_5$ is a brick.

   \textbf{Necessity.} Since $G$ is a cubic brick, 
   other than $K_4$, $\overline{C}_6$, and the Petersen graph, 
   $G$ contains at least one $b$-invariant edge by Theorem \ref{ex-b}, which is forcing.
   By Theorem \ref{forcing-edges}, $G$ contains $K_4$ as a base.
   Since $G\neq K_4$ and $\overline{C_6}$, $|V(G)|\geq 8$.
   If $|V(G)|=8$, then $G$ is isomorphic to a bicorn, i.e.
   the graph $G_2$ shown in Fig. \ref{f4}.
   In the following, we label the vertices in $G_i$   as shown in Fig. \ref{f4} for
   $2\leq i\leq 8$.
   \begin{Cla}
      \label{u-v}
      If $|V(G)|=10$, then $G$ is isomorphic to $G_3$ or $G_4$.
   \end{Cla}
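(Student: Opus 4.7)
The plan is to exploit the fact that $G$ must be built from the bicorn $G_2$ by exactly one $Y\rightarrow\triangle$-operation. Indeed, since $|V(G)|=10$ and $G$ contains $K_4$ as a base, $G$ arises from $K_4$ by three consecutive $Y\rightarrow\triangle$-operations; the $8$-vertex intermediate is the bicorn $G_2$ by the $|V(G)|=8$ case stated just before the claim. Thus $G\cong G_2^{\triangle}(v)$ for some vertex $v$ of $G_2$, and the problem reduces to a finite case analysis over orbit representatives for the action of $\mathrm{Aut}(G_2)$ on $V(G_2)$.

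Reading $G_2$ from Fig.~\ref{f4}, I would first partition $V(G_2)$ into $\mathrm{Aut}(G_2)$-orbits; the bicorn has enough symmetry that I expect only a small handful. For each orbit representative $v_i$ I would construct the candidate $H_i:=G_2^{\triangle}(v_i)$, identify it up to isomorphism, and check whether $H_i\cong G_3$ or $H_i\cong G_4$. If it is, nothing more is needed; if not, the aim is to rule out $H_i$ by exhibiting a $b$-invariant but non-forcing edge. The standard way to do so is via Corollary~\ref{bf}: locate a pyramid $P$ inside $G_2$ whose bottom edge $e$ is not a forcing edge of $G_2$ (verified by exhibiting two distinct perfect matchings of $G_2$ both containing $e$), with the property that both endpoints of $e$ survive into $H_i$. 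Because a $Y\rightarrow\triangle$-operation only subdivides a single vertex into a triangle, the endpoint condition holds as long as $v_i$ is not an endpoint of $e$, and Corollary~\ref{bf} then yields the desired $b$-invariant non-forcing edge of $H_i$, contradicting the hypothesis on $G$.

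The main obstacle is the concrete verification that every representative $v_i$ with $H_i\notin\{G_3,G_4\}$ does admit such a pyramid in $G_2$, together with the pair of perfect matchings witnessing that its bottom edge is non-forcing. This requires a short but careful inventory of the pyramid subgraphs of $G_2$ and their perfect-matching behaviour. Because $G_2$ has only $8$ vertices, and each $Y\rightarrow\triangle$-operation can fail the endpoint condition for at most a couple of pyramids, the enumeration is small and the argument terminates by inspection once all orbit representatives have been handled; the two candidates for which no such obstructing pyramid exists will be exactly $G_3$ and $G_4$.
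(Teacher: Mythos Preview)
Your overall plan---reduce to $G\cong G_2^{\triangle}(v)$ for some $v$, pass to $\mathrm{Aut}(G_2)$-orbit representatives, and rule out the bad orbit by producing a $b$-invariant non-forcing edge---is exactly the paper's line. The gap is in the mechanism you propose for the ruling-out step.

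You intend to invoke Corollary~\ref{bf} by locating a pyramid \emph{inside $G_2$} whose bottom edge is already non-forcing in $G_2$. But carry out the inventory you describe: the only triangles of the bicorn are its two disjoint triangles, and for each of them the unique pair of outside neighbours that are adjacent is the pair of ``middle'' vertices (the two vertices of $G_2$ not lying in either triangle). Hence every pyramid of $G_2$ has the same bottom edge, namely the edge $f$ joining those two middle vertices. Deleting the endpoints of $f$ from $G_2$ leaves two triangles joined by a single bridge, which has a unique perfect matching; so $f$ \emph{is} forcing in $G_2$. Thus Corollary~\ref{bf} is vacuous here, and your inventory yields no obstructing edge for any $H_i$. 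Worse, the bad representative is precisely a middle vertex $w$ (so $H\cong R_1$), and then one endpoint of $f$ is destroyed by the $Y\rightarrow\triangle$-operation anyway, so even the endpoint hypothesis of Corollary~\ref{bf} fails.

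The repair---and what the paper actually does---is to search for pyramids in $R_1$ rather than in $G_2$. Replacing $w$ by a triangle creates two new pyramids in $R_1$ whose bottom edges (the paper calls them $uu'$ and $vv'$) were not bottom edges of any pyramid in $G_2$. Since $|V(R_1)|=10\ge 8$, Lemma~\ref{lem4} applies directly to $R_1$ and shows these edges are $b$-invariant; a quick check shows $R_1-u-u'$ (and $R_1-v-v'$) has two perfect matchings, so they are non-forcing. That contradiction eliminates the $w$-orbit, and the remaining two orbits give $G_3$ and $G_4$.
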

   \begin{proof}
   Obviously $G_2$ is a base of $G$.
   If $G=G_2^{\triangle}(w)$, then as shown in Fig. \ref{f4}, $G\cong R_1$,
   and $vv'$ and $uu'$ are two bottom edges of two   pyramids  of $R_1$ respectively.
   By Lemma \ref{lem4}, $vv'$ and $uu'$ are both $b$-invariant edges of $R_1$.
   However, neither $vv'$ nor $uu'$ is a forcing edge of $R_1$, a contradiction.
   By symmetry of $G_2$, $G$ is isomorphic to
   $G_2^{\triangle}(u')=G_3$ or $G_2^{\triangle}(v')=G_4$.
   \end{proof}
   Next we assume that $|V(G)|\geq 12$.

   \begin{Cla}
      \label{G34}
      If $|V(G)|\geq 12$, then $G$ contains  $G_3$ or $G_4$ as a base.
   \end{Cla}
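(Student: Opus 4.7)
The strategy is to trace the $Y\to\triangle$-operation chain from $K_4$ to $G$ and pin down the 10-vertex intermediate. Since $G$ contains $K_4$ as a base, there is a sequence of cubic bricks $K_4 = H_0, H_1, \ldots, H_k = G$ in which each step is a single $Y\to\triangle$-operation (Theorem \ref{brick-expand}), with $|V(H_i)| = 4+2i$. Because $|V(G)| \geq 12$, we have $k \geq 4$, so the 10-vertex intermediate $H_3$ exists. By the vertex-transitivity of $K_4$, $H_1 \cong \overline{C_6}$; since $\overline{C_6}$ has a single vertex orbit, $H_2 \cong G_2$. Hence $H_3$ is obtained from $G_2$ via a single $Y\to\triangle$-operation, and the case analysis of Claim \ref{u-v} enumerates the possibilities up to isomorphism as $H_3 \in \{R_1, G_3, G_4\}$, one per vertex orbit of $G_2$.

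If $H_3 \in \{G_3, G_4\}$, then $G_3$ or $G_4$ is a base of $G$ and the claim is proved. The real work is to rule out $H_3 \cong R_1$. For this, I would use the two pyramids of $R_1$ exhibited in the proof of Claim \ref{u-v}: their bottom edges $uu'$ and $vv'$ are not forcing in $R_1$, so Corollary \ref{bf} guarantees that whenever both endpoints of $uu'$ (respectively $vv'$) survive in $G$, that edge is a $b$-invariant but non-forcing edge of $G$, contradicting the standing hypothesis that every $b$-invariant edge of $G$ is forcing. When all four vertices $u, u', v, v'$ remain in $V(G)$, this immediately closes the argument.

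The main obstacle is the case where some vertex among $\{u, u', v, v'\}$ has been replaced by a triangle in a later $Y\to\triangle$-operation of the chain. To handle this, the plan is to reorder the operation sequence: two $Y\to\triangle$-operations on distinct vertices (neither being a fresh vertex introduced by the other) commute, so if the operation producing $R_1$ at step 3 is $Y\to\triangle(w)$ and some later step applies $Y\to\triangle$ to a vertex of $G_2$ in a different orbit from $w$, swapping the two operations yields an alternative chain from $K_4$ to $G$ whose 10-vertex intermediate falls in $\{G_3, G_4\}$, again giving $G_3$ or $G_4$ as a base of $G$. The delicate point is verifying that such a swap is always available: one must check, using the explicit placement of the pyramid vertices in $R_1$ and the vertex orbits of $G_2$, that whenever some later operation touches $\{u, u', v, v'\}$, at least one commutable pair with a different-orbit representative can be identified; a short case analysis on the first later step that hits $\{u, u', v, v'\}$ should suffice.
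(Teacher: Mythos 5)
Your proposal is correct and follows essentially the same route as the paper: reduce to the three possible $10$-vertex bases $R_1$, $G_3$, $G_4$, and eliminate $R_1$ by using its two pyramids together with Corollary \ref{bf} when $u,u',v,v'$ all survive in $G$, and otherwise by noting that an operation applied to one of $u,u',v,v'$ can be performed first on $G_2$, yielding $G_3$ or $G_4$ as a base. The commutation of $Y\rightarrow\triangle$-operations that you treat as the ``delicate point'' is exactly the step the paper compresses into ``which implies that $G_3$ is a base of $G$,'' and it goes through since none of $u,u',v,v'$ lies in the $G_2$-orbit of $w$.
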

   \begin{proof}
      It is obvious that at least one of $R_1$, $G_3$ and $G_4$ is a base of $G$.
      If $R_1$ is a base of $G$, then by the proof of Claim \ref{u-v},
      $uu'$ and $vv'$ are both $b$-invariant but not forcing in $R_1$.
      If those vertices $u$, $u'$, $v$ and $v'$ in $R_1$ are also vertices of $G$,
      then by Corollary \ref{bf}, $uu'$ and $vv'$ are also $b$-invariant but not forcing
      edges of $G$, a contradiction.
      So at least one of vertices in $\{u, u', v, v'\}$ are not in $G$,
      which implies that $G_3$ is a base of $G$.
      Therefore, $G$ contains $G_3$ or $G_4$ as a base.
   \end{proof}
   By Claim \ref{G34}, we will consider the following two cases.
   \begin{Cas}
      $G_3$ is a base of $G$.
   \end{Cas}
   By the definition of $R_0$
   (see Fig. \ref{f2}), $R_0\cong G_3^{\triangle}(u)$. Moreover, we have the following claim.
   \begin{Cla}
      \label{ex-Q}
      If $|V(G)|\geq 12$, then
      $G\ncong R_0$ and $G$ does not contain the graph $R_0$ as a base.
   \end{Cla}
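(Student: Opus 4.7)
The plan is to prove Claim~\ref{ex-Q} by a short contradiction argument that combines Proposition~\ref{pro1} with Theorem~\ref{ex-b} and the standing hypothesis of the necessity part of Theorem~\ref{th1}. Specifically, I would suppose for contradiction that either $G\cong R_0$ or that $R_0$ is a base of $G$, and then derive that $G$ has no forcing edge at all, which is incompatible with the setup.

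First I would invoke Proposition~\ref{pro1}: it states outright that $R_0$ itself has no forcing edge, and moreover every graph that admits $R_0$ as a base is forcing-free. So under the contradictory assumption, $G$ contains no forcing edge. Next I would use the global hypothesis driving the necessity proof, namely that $G$ is a cubic brick distinct from $K_4$, $\overline{C_6}$ and the Petersen graph in which every $b$-invariant edge is forcing. Because $G$ is outside the three exceptional bricks, Theorem~\ref{ex-b} produces at least one $b$-invariant edge $e$ of $G$; by the standing hypothesis $e$ is forcing, contradicting the forcing-free conclusion from Proposition~\ref{pro1}. This forces $G\not\cong R_0$ and rules out $R_0$ as a base of $G$, as desired.

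I do not expect any real obstacle here: Proposition~\ref{pro1} already carries the substantive structural content (through Corollary~\ref{Y-Op}, $Y\rightarrow\triangle$-operations never create forcing edges, so an $R_0$-based graph inherits the forcing-free property of $R_0$), and the claim reduces to lining up that fact with the existence statement of Theorem~\ref{ex-b}. The only thing to double-check is that the hypothesis ``$|V(G)|\geq 12$'' is compatible with $G\cong R_0$, since $|V(R_0)|=12$; this ensures the first half of the claim is not vacuous and that the contradiction argument is actually needed rather than being resolved by a trivial vertex count.
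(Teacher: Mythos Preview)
Your proposal is correct and follows essentially the same approach as the paper: both arguments combine Proposition~\ref{pro1} (so that $G$, whether isomorphic to $R_0$ or having $R_0$ as a base, has no forcing edge) with Theorem~\ref{ex-b} and the standing hypothesis that every $b$-invariant edge of $G$ is forcing to reach a contradiction. The only cosmetic difference is that the paper treats the two alternatives separately and, for the base case, cites Corollary~\ref{Y-Op} directly rather than the ``furthermore'' clause of Proposition~\ref{pro1}; the logical content is the same.
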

   \begin{proof} By Proposition \ref{pro1}, $R_0$ has no forcing edges.
   So $G\ncong R_0$.
      Suppose $G$ contains $R_0$ as a base.
      By Corollary \ref{Y-Op}, $G$ contains no forcing edges.
      By Theorem \ref{ex-b}, $G$ contains at least one $b$-invariant edge, a contradiction.
   \end{proof}

   By sufficiency, all $b$-invariant edges of $G_3$ are forcing edges.
   Since $u_2u_3$, $v'w$ and $w'x'$ are all forcing edges of $G_3$,
   they are also $b$-invariant edges of $G_3$ by symmetry.
   Let $R_2=G_3^{\triangle}(v')$ (see Fig. \ref{f5}). Now we have the following claim.

   \begin{figure}[ht]
      \centering
      \includegraphics[scale=0.35]{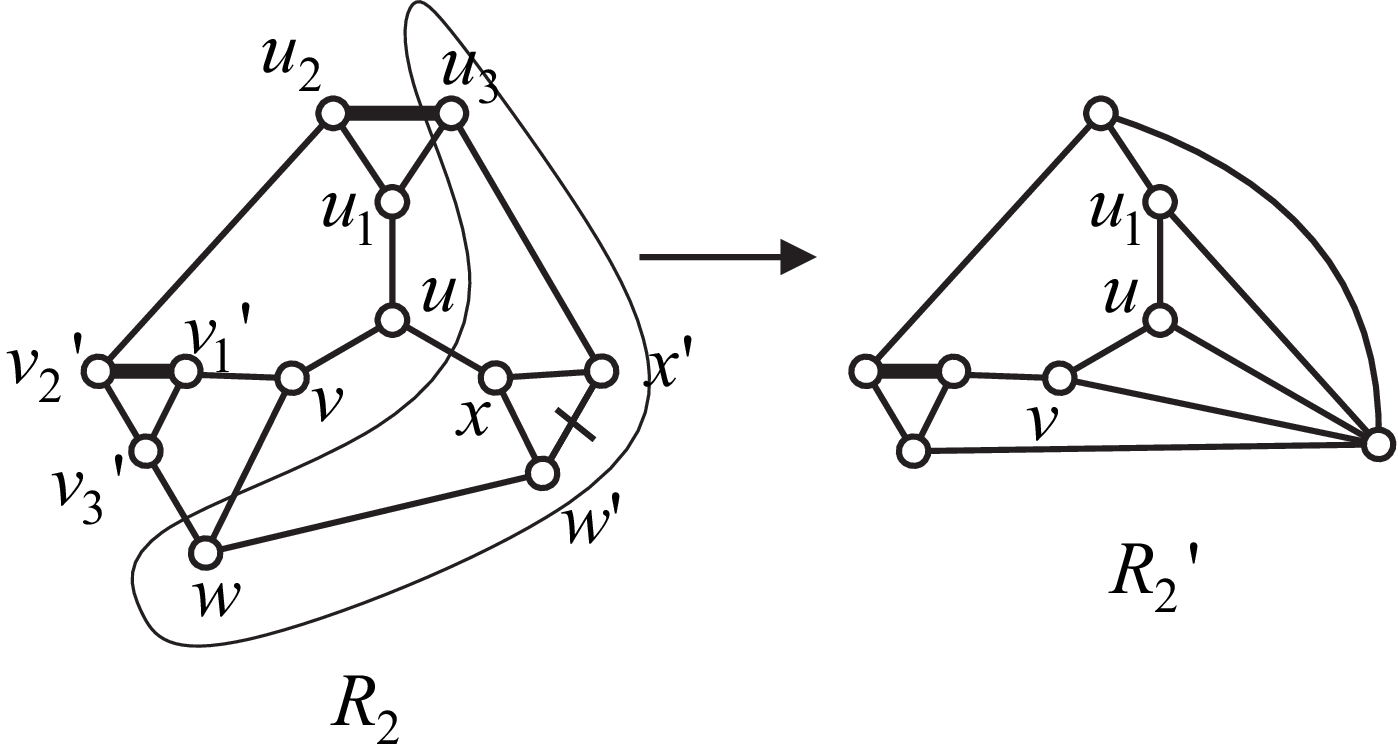}
      \caption{\label{f5}Performing a $Y\rightarrow \triangle$-operation on one end vertex of a forcing
      edge of $G_3$.}
   \end{figure}

   \begin{Cla}
      \label{ex-R}
      $w'x'$ is a $b$-invariant but not forcing edge of $R_2$.
   \end{Cla}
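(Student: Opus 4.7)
The plan is to obtain both conclusions of Claim \ref{ex-R} in a single application of Corollary \ref{bf}, taking $G = G_4$ and $G^* = R_2$. The appeal of this approach is that Corollary \ref{bf} already packages the pyramid/$b$-invariant argument together with the preservation of non-forcing under $Y\rightarrow\triangle$-operations, so I only need structural information about $G_4$ plus the fact that $G_4$ is a base of $R_2$.

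First I would verify that $R_2$ contains $G_4$ as a base. Since $u'$ and $v'$ are distinct vertices of $G_2$, the two $Y\rightarrow\triangle$-operations on them commute, giving
\[
R_2 = G_3^\triangle(v') = (G_2^\triangle(u'))^\triangle(v') = (G_2^\triangle(v'))^\triangle(u') = G_4^\triangle(u').
\]
Hence $G_4$ is a base of $R_2$, and moreover $w',x' \in V(R_2)$ because the operation producing $R_2$ from $G_4$ is performed on $u'\notin\{w',x'\}$. Since $G_4$ contains $K_4$ as a base and $|V(G_4)| = 10 \geq 8$, the hypotheses of Corollary \ref{bf} on $G$ are satisfied.

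Next I would read off from Fig. \ref{f4} two structural properties of $G_4$. First, that the replacement-triangle in $G_4$ corresponding to $v'$ (coming from the $Y\rightarrow\triangle$-operation on $v'$ in $G_2$), together with the two external edges joining two of its vertices to $w'$ and $x'$ and the edge $w'x'$ itself, forms a pyramid whose bottom edge is $w'x'$. Second, that $w'x'$ is not a forcing edge of $G_4$: it is not drawn bold in Fig. \ref{f4}, and to pin this down I would explicitly exhibit two distinct perfect matchings of $G_4$ containing $w'x'$. With both items in hand, Corollary \ref{bf} immediately yields that $w'x'$ is a $b$-invariant but not forcing edge of $R_2$, proving the claim.

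The main obstacle is the figure-based verification of these two structural properties of $G_4$, namely locating the pyramid and writing down two perfect matchings through $w'x'$. Both are routine once Fig. \ref{f4} is consulted carefully, but they are the only nontrivial inputs to the argument; all other steps are formal consequences of the commutation of the $Y\rightarrow\triangle$-operations and of Corollary \ref{bf}.
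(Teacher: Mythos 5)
Your strategy of deriving both halves of the claim from Corollary \ref{bf} with $G=G_4$ and $G^*=R_2$ cannot work, because the two ``routine figure checks'' you defer are mutually incompatible. Suppose, as you require, that $w'x'$ is the bottom edge of a pyramid in $G_4$. Since $G_4$ contains $K_4$ as a base and $|V(G_4)|=10\geq 8$, Lemma \ref{lem4} already makes $w'x'$ a $b$-invariant edge of $G_4$. But $G_4$ is one of the graphs for which the sufficiency direction of Theorem \ref{th1} is verified directly: every $b$-invariant edge of $G_4$ is forcing. Hence $w'x'$ would be a forcing edge of $G_4$, contradicting the other hypothesis of Corollary \ref{bf} that you need, namely that $w'x'$ is not forcing in $G_4$. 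So at least one of your two inputs must fail, and the application of Corollary \ref{bf} is vacuous. This is not an accident of $G_4$: the same reasoning shows that Corollary \ref{bf} can never be applied with the base graph $G$ taken to be one of the graphs $G_2,\dots,G_8$.

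The underlying structural point is that $w'x'$ is \emph{not} the bottom edge of a pyramid, in $G_4$ or in $R_2$, which is exactly why the paper does not invoke Lemma \ref{lem4} or Corollary \ref{bf} here. Instead it proves non-forcing in $R_2$ directly (the $Y\rightarrow\triangle$-operation on $v'$ creates the 4-cycle $vv_1'v_3'wv$, each of whose two perfect matchings extends to a perfect matching of $R_2$ containing $w'x'$), and it proves $b$-invariance by exhibiting the ad hoc tight cut $\nabla(X)$ with $X=\{w,w',x,x',u_3\}$, whose induced subgraph in $R_2-w'x'$ is a path rather than a pyramid, and by checking that $(R_2-w'x')/X$ contains the brick $W_5$ as a base. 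Your proposal contains no substitute for this second, harder half. A smaller point: the identification $R_2=G_4^{\triangle}(u')$ via commutativity of the two operations on $G_2$ is plausible up to isomorphism, but the vertex labels of $G_3$ and $G_4$ in Fig. \ref{f4} are assigned independently, so the edge labelled $w'x'$ in $G_4$ need not correspond to the edge $w'x'$ of $R_2$; this too would require justification.
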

   \begin{proof}
      In $R_2$, since any perfect matching of the 4-cycle $vv_1'v_3'wv$ can be extended
      to a perfect matching containing $w'x'$, $w'x'$ is not a forcing edge of $R_2$.
   Next we show $x'w'$ is a $b$-invariant edge of $R_2$.
   Let $X=\{w,w',x,x',u_3\}$.
   Then the subgraph of $R_2-x'w'$ induced by $X$ is exactly an even path
   $ww'xx'u_3$. As both $w'$ and $x'$ have degree two in $R_2-x'w'$,
     any perfect matching of $R_2-x'w'$ contains exactly one  edge in $\nabla(X)$.
   So $\nabla(X)$ is a nontrivial tight cut of $R_2-x'w'$.
   Obviously $(R_2-x'w')/\overline{X}$ is a matching covered bipartite graph.
   Let $R_2'=(R_2-x'w')/X$.
   As shown in Fig. \ref{f5}, $R_2'$ contains $W_5$ as a base.
   Since $W_5$ is a brick, $R_2'$ is also a brick by Theorem \ref{brick-expand}.
   So $x'w'$ is a $b$-invariant edge of $R_2$ and we are done.
   \end{proof}

   \begin{Cla}
      \label{R-b}
      If $|V(G)|=12$, then $G\cong G_5$.
      If $|V(G)|\geq 14$, then $R_2$ is not a base of $G$.
   \end{Cla}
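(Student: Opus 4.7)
The statement has two parts, which I would handle separately.

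\emph{Part 1 ($|V(G)|=12 \Rightarrow G\cong G_5$).} Since $G_3$ is a base of $G$ and $|V(G_3)|=10$, we must have $G=G_3^{\triangle}(z)$ for some vertex $z\in V(G_3)$. The approach is to enumerate $V(G_3)$ modulo $\mathrm{Aut}(G_3)$ and identify each resulting graph $G_3^{\triangle}(z)$ up to isomorphism. The distinct possibilities should be $R_0=G_3^{\triangle}(u)$, $R_2=G_3^{\triangle}(v')$, and $G_5$ (together with any orbit-equivalent vertices yielding the same graph). The case $G\cong R_0$ is ruled out by Claim \ref{ex-Q}, since $R_0$ has no forcing edges but $G$ must have a $b$-invariant edge which is forcing; the case $G\cong R_2$ is ruled out by Claim \ref{ex-R}, since $w'x'$ would then be a $b$-invariant non-forcing edge of $G$. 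The only remaining possibility is $G\cong G_5$.

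\emph{Part 2 ($|V(G)|\geq 14 \Rightarrow R_2$ is not a base).} I would argue by contradiction: suppose $R_2$ is a base of $G$. Let $\tilde e$ denote the unique edge of $G$ corresponding to $w'x'$ under the sequence of $Y\to\triangle$-operations taking $R_2$ to $G$. By iterating Lemma \ref{lem-f}, the edge $\tilde e$ is non-forcing in $G$. For the $b$-invariance of $\tilde e$, I would lift the tight cut argument from Claim \ref{ex-R}. Set $X=\{w,w',x,x',u_3\}$ and let $X^*\subseteq V(G)$ be the natural extension of $X$ (replacing each vertex of $X$ subjected to $Y\to\triangle$-operations by the corresponding expanded triangle-vertices). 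One then verifies: (a) $\nabla(X^*)$ is a nontrivial tight cut of $G-\tilde e$, since each perfect matching meets it in exactly one edge, by a parity argument using that $|X^*|$ and $|V(G)\setminus X^*|$ remain odd and that the boundary-incidence structure is forced by the degree constraints; (b) $(G-\tilde e)/\overline{X^*}$ is a matching covered bipartite graph, inherited from $(R_2-w'x')/\overline{X}$ after the outer $Y\to\triangle$-operations, which preserve this structure; (c) $(G-\tilde e)/X^*$ is a brick, obtained from the brick $(R_2-w'x')/X$ (which contains $W_5$ as a base, by Claim \ref{ex-R}) via a sequence of $Y\to\triangle$-operations on $\overline X$-vertices, brickness being preserved by Theorem \ref{brick-expand}. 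Therefore $b(G-\tilde e)=1$, so $\tilde e$ is a $b$-invariant edge of $G$. Combined with non-forcing, this contradicts our standing hypothesis, so $R_2$ cannot be a base of $G$.

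The main obstacle will be the bookkeeping in Part 2: carefully tracking how the set $X$ evolves into $X^*$ under an arbitrary sequence of $Y\to\triangle$-operations, and checking that all three properties above survive. In particular, when an operation is performed at a vertex of $X$ itself, the inner contraction $(G-\tilde e)/X^*$ must be described as obtained from $(R_2-w'x')/X$ by a mixture of ordinary $Y\to\triangle$-operations (for $\overline X$-vertices) and generalized $Y\to\triangle$-operations on the vertex of degree four arising from the contraction, so that Lemma \ref{GY} is needed alongside Theorem \ref{brick-expand}. This parallels the second half of the proof of Lemma \ref{lem4}, adapted from the pyramid setting there to the more flexible tight-cut setting here.
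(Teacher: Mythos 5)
Your Part 1 is fine and coincides with the paper's: enumerate the one $Y\to\triangle$-expansion of $G_3$ up to symmetry and rule out $R_0$ and $R_2$ by Claims \ref{ex-Q} and \ref{ex-R}. The problem is Part 2, where your plan of ``lifting'' the tight cut $\nabla(X)$, $X=\{w,w',x,x',u_3\}$, to a cut $\nabla(X^*)$ of $G-\tilde e$ does not go through. First, tightness of $\nabla(X^*)$ is \emph{not} forced by parity and degrees: parity only gives that a perfect matching meets the cut in an odd number of edges, and the original argument that this number is exactly one rests on the very specific structure that $w'$ and $x'$ are the only degree-two vertices of $X$ in $R_2-w'x'$ and that $X$ induces the path $ww'xx'u_3$. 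If, say, the operation is performed on $x$, the replacement triangle has a vertex with a neighbour outside $X^*$, and one can exhibit matchings meeting $\nabla(X^*)$ in three edges; the same danger arises for operations on $w$ or $u_3$ unless $X^*$ is chosen with care. Second, your appeal to Lemma \ref{GY} is misplaced: the contracted vertex of $R_2'=(R_2-w'x')/X$ has degree \emph{five} ($w$ and $u_3$ each contribute two boundary edges and $x$ one), so neither Theorem \ref{brick-expand} nor Lemma \ref{GY} covers the change of the quotient when a vertex of $X$ is expanded; the paper proves no analogue for degree-five vertices.

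The paper avoids all of this by first showing that no $Y\to\triangle$-operation on a vertex of $X$ can occur at all, using two ideas absent from your proposal. (1) An operation on $w$, $w'$, $x'$ or $u_3$ destroys the forcing edges in the second of the two triangles of $G_3$ carrying forcing edges, so by Corollary \ref{Y-Op} the resulting $G$ has at most one forcing edge; but $G$ has at least two $b$-invariant edges (the bicorn $G_2$ being the unique brick with exactly one), each of which must be forcing under the standing hypothesis --- a contradiction. (2) An operation on $x$ turns $w'x'$ into the bottom edge of a pyramid, so Corollary \ref{bf} makes $w'x'$ a $b$-invariant non-forcing edge of $G$ --- again a contradiction. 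Only after these exclusions, with all of $X$ surviving intact into $G$, does the tight-cut argument of Claim \ref{ex-R} apply verbatim with the \emph{same} set $X$ (the outer vertices may have been expanded, which is harmless), and Theorem \ref{brick-expand} applies because $(G-w'x')/X$ has $R_2'$ as a base via operations on degree-three vertices only. You should replace the lifting argument by these two exclusion steps.
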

   \begin{proof}
      By Claims \ref{ex-Q} and \ref{ex-R}, if $|V(G)|=12$, then $G\cong G_5$.
      Next we consider $|V(G)|\geq 14$.
      Suppose, to the contrary, that $R_2$ is a base of $G$.
      For $R_2$, if we perform a $Y\rightarrow \triangle$-operation on
      $w$, $w'$, $x'$ or $u_3$,
      then similar to the process from $G_3$ to $R_2$ in Claim \ref{ex-R},
      it will deduce that the subgraph,
      which corresponds to the triangle $u_1u_2u_3u_1$ or the triangle $wvv'w$ in $G_3$,
      contains no forcing edges.
      By Corollary \ref{Y-Op}, $G$ contains at most one forcing edge.
      By Theorem \ref{ex-b} and the fact that $G_2$ is the only brick with exactly one
      $b$-invariant edge (see Theorem 1.1 (iii) in \cite{Carvalho2002II}), $G$ contains
      at least two $b$-invariant edges, a contradiction.
      Thus $w$, $w'$, $x'$ and $u_3$ are vertices of $G$.
      If $x$ is not a vertex of $G$, then $w'x'$ is the bottom edge of a pyramid in $R_2^{\triangle}(x)$.
      By Corollary \ref{bf}, $w'x'$ is a $b$-invariant, but not forcing edge of $G$,
      a contradiction again.
      So $x$ is also a vertex of $G$. Similar to the proof of Claim \ref{ex-R},
      choosing the same $X=\{w,w',x,x',u_3\}$, we can show that $\nabla(X)$ is a
      nontrivial tight cut of $G-x'w'$ and $(G-x'w')/\overline{X}$ is a
      matching covered bipartite graph.
      Since $(G-x'w')/X$ contains a brick $R_2'=(R_2-x'w')/X$ as a base,
      $(G-x'w')/X$ is also a brick by Theorem \ref{brick-expand}.
      So $w'x'$ is a $b$-invariant but not a forcing edge of $G$, a contradiction.
   \end{proof}

   \begin{Cla}
      If $|V(G)|\geq 14$, then $G$ is isomorphic to  $G_7$ or $G_8$ (see Fig. \ref{f4}).
   \end{Cla}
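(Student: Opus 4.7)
The plan is to enumerate, up to isomorphism, the cubic bricks on at least $14$ vertices that contain $G_3$ as a base and in which every $b$-invariant edge is forcing, and to show that the only such graphs are $G_7$ and $G_8$. The main engine is the pair Lemma \ref{lem4}/Corollary \ref{bf}: whenever a descendant $H$ of $G_3$ contains a pyramid whose bottom edge $e$ is not a forcing edge of $H$, then $e$ is a $b$-invariant but non-forcing edge in every further descendant of $H$ that still contains both endpoints of $e$. This gives a finite, tree-like case analysis where each admissible $Y\rightarrow\triangle$-operation is constrained by the requirement that no such "bad" pyramid survives in $G$.

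First I would run through all single-vertex $Y\rightarrow\triangle$-operations on $G_3$, up to the symmetries of $G_3$. Claim \ref{R-b} has already ruled out the operation on $v'$ (and hence, by symmetry, every vertex of $G_3$ that lies in the same orbit as $v'$). For each of the remaining orbit-representatives, I would check the resulting $12$-vertex graph $G_3^{\triangle}(x)$: either it is one of $G_5, G_6, G_7, G_8$ (and hence admissible, and must be identified with its name), or it contains a pyramid whose bottom edge is not a forcing edge, in which case the same argument as in Claim \ref{R-b} (using Corollary \ref{bf}) rules out $G_3^{\triangle}(x)$ as a base of a larger admissible $G$. By the assumption $|V(G)|\geq 14$, the $12$-vertex admissible bases reduce to those which can still be extended.

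Next, for each surviving base $H$ with $|V(H)|\geq 12$, I would locate every pyramid of $H$ whose bottom edge is not forcing in $H$; both endpoints of each such edge must remain vertices of $G$, which forbids performing a $Y\rightarrow\triangle$-operation on any of them. I would then apply Corollary \ref{Y-Op} (forcing edges cannot increase) together with the fact that $K_4$ has exactly six forcing edges (bounding the total supply of forcing edges in $G$) and Proposition \ref{pro1} ($R_0$-based graphs have no forcing edges at all, so no such configuration can arise) to eliminate the remaining options. What remains after this pruning are precisely the sequences of $Y\rightarrow\triangle$-operations yielding $G_7$ and $G_8$, which were already confirmed admissible in the Sufficiency argument.

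The hard part will be the bookkeeping: systematically listing the pyramids that appear after each triangulation and verifying, with the help of symmetry, that no admissible continuation produces anything outside $\{G_7, G_8\}$. Symmetry reductions on $G_3$ and on the intermediate $12$-vertex graphs will shrink the number of orbits that need to be inspected, and the shrinking supply of forcing edges provided by Corollary \ref{Y-Op} forces termination after very few steps. I expect the argument, once the orbit analysis is tabulated, to collapse to a handful of explicit checks parallel to those in Claims \ref{ex-R} and \ref{R-b}, each identifying a pyramid whose bottom edge becomes $b$-invariant but not forcing and thereby contradicting the hypothesis on $G$.
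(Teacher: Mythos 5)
Your plan follows the same overall strategy as the paper: prune the tree of $Y\rightarrow\triangle$-expansions of $G_3$ by exhibiting, in each forbidden intermediate graph, a $b$-invariant edge that is not forcing, and conclude that only $G_7$ and $G_8$ survive. However, as written it is a programme rather than a proof (the ``bookkeeping'' you defer is precisely the content of the paper's argument), and it contains one concrete gap: you propose to detect all bad edges by locating pyramids whose bottom edge is non-forcing and invoking Lemma \ref{lem4}/Corollary \ref{bf}. That machinery does not suffice here. The exclusion of $R_3=G_5^{\triangle}(v_1)$ does go through the pyramid lemma, but the exclusion of $R_4=G_5^{\triangle}(v_2)$ (and, by symmetry, $G_5^{\triangle}(v_3)$) does not: the offending edge $u_2u_3$ is not the bottom edge of a pyramid in $R_4$. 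The paper must instead take the five-vertex set $X=\{u_1,u_2,u_3,v',x'\}$, verify that $\nabla(X)$ is a nontrivial tight cut of $R_4-u_2u_3$, and show that $(R_4-u_2u_3)/X$ is a brick because it contains $W_5$ as a base --- an argument of a different shape from Lemma \ref{lem4} (one $5$-set inducing a path, rather than two $3$-sets forming a pyramid). The same issue already arises for $R_2$ in Claim \ref{ex-R}, so ``the same argument as in Claim \ref{R-b}'' is not, in fact, a pyramid argument. A further subtlety your plan misses: when $R_4$ is a base of $G$ one must split on whether $u_1\in V(G)$; if not, the paper passes to $R_4^{\triangle}(u_1)$, where $u_2u_3$ only then becomes a pyramid bottom edge, before Corollary \ref{bf} can be applied. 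Without these two ingredients the case analysis cannot be closed, so the proposal as it stands does not establish the claim.
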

   \begin{proof}
      By Claims \ref{ex-Q} and \ref{ex-R}, $G_5$ is a base of $G$.
      Next we show the triangle $T=v_1v_2v_3v_1$ in $G_5$ is also a triangle of $G$.
     It should be noted that $G_5$ contains exactly three forcing edges:
     $u_2u_3$, $v'w$ and $w'x'$.

      Let $R_3=G_5^{\triangle}(v_1)$, as shown in Fig. \ref{f6}.
      $R_3-v'-w$ contains at least two different perfect matchings, which implies
      that $v'w$ is not a forcing edge of $R_3$.
      Since $v'w$ is the bottom edge of a pyramid of $G_5$,
      $v'w$ is a $b$-invariant edge of $R_3$ by Lemma \ref{lem4}.
      So $G\ncong R_3$.
      By Claim \ref{R-b}, $v'$ and $w$ are vertices of $G$.
      Further, from Corollary \ref{bf}, $R_3$ is not a base of $G$.

      \begin{figure}[h]
         \centering
         \includegraphics[scale=0.35]{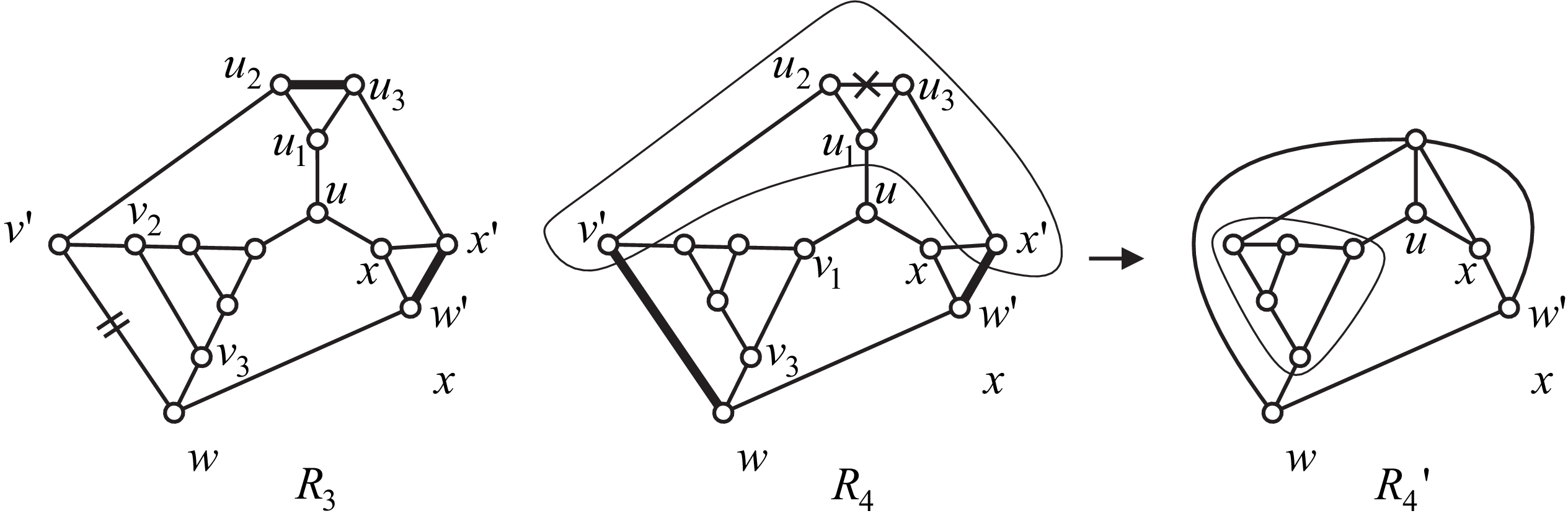}
         \caption{\label{f6}
         $R_3=G_5^{\triangle}(v_1)$, $R_4=G_5^{\triangle}(v_2)$ and $R_4'=(R_4-u_2u_3)/X$,
         where $X=\{u_1,u_2,u_3,v',x'\}$.}
      \end{figure}

      If we perform a $Y\rightarrow \triangle$-operation on $v_2$ or $v_3$ of $G_5$,
      say $v_2$,
      then $G_5^{\triangle}(v_2)\cong R_4$ (see Fig. \ref{f6}).
      Since $R_4-u_2-u_3$ contains two different perfect matchings,
      $u_2u_3$ is not a forcing edge of $R_4$.
      In the graph $R_4-u_2u_3$, let $X=\{u_1,u_2,u_3,v',x'\}$.
      Then $\nabla(X)$ is a nontrivial tight cut of $R_4-u_2u_3$.
      Obviously $(R_4-u_2u_3)/\overline{X}$ is a matching covered bipartite graph.
      Since $(R_4-u_2u_3)/X$ is isomorphic to the graph $R_4'$ in Fig. \ref{f6}, it
      contains $W_5$ as a base. Since $W_5$ is a brick,
      $(R_4-u_2u_3)/X$ is also a brick by Theorem \ref{brick-expand}.
      Thus $u_2u_3$ is a $b$-invariant but not forcing edge of $R_4$.
      Further, $G\ncong R_4$.

      Assume $R_4$ is a base of $G$.
      By Claim \ref{R-b}, $u_2$, $u_3$, $v'$  and $x'$ are vertices of $G$,
      i.e. $X\setminus \{u_1\}\subset V(G)$.
      If $u_1$ is also a vertex of $G$,
      then similar to the proof that $u_2u_3$ is a $b$-invariant edge of $R_4$,
      we can choose the same $X$ to show
      that  $\nabla(X)$ is a nontrivial tight cut of $G-u_2u_3$ and
      $(G-u_2u_3)/\overline{X}$ is a matching covered bipartite graph.
      Since $(G-u_2u_3)/X$ contains a brick $R_4'=(R_4-u_2u_3)/X$ as a base,
      $(G-u_2u_3)/X$ is also a brick by Theorem \ref{brick-expand}.
      So $u_2u_3$ is also a $b$-invariant but not forcing edge of $G$, a contradiction.
      If $u_1$ is not a vertex of $G$, then
      we can perform a $Y\rightarrow \triangle$-operation on $u_1$ for the graph $R_4$.
      Then $u_2u_3$ corresponds to the bottom edge of a   pyramid in the
      $R_4^{\triangle}(u_1)$.
      By Corollary \ref{bf}, $u_2u_3$ is also a $b$-invariant but not forcing edge of $G$,
      a contradiction.

      Since none of $R_0$, $R_2$, $R_3$ and $R_4$ is a base of $G$,
      but $G_5$ is a base of $G$ and $|V(G)|\geq 14$, $G\in \{G_7,G_8\}$.
   \end{proof}

   \begin{Cas}
      \label{cas2}
      $G_3$ is not a base of $G$.
   \end{Cas}

   By the proof of Claim \ref{G34}, $G_4$ is a base of $G$ and  $R_1$ is not a base of $G$.
   \begin{Cla}
      If $|V(G)|=12$, then $G\cong G_6$.
   \end{Cla}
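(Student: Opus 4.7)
The plan is as follows. Under the hypotheses of Case \ref{cas2} together with $|V(G)|=12$, since $G_4$ is a base of $G$ and $|V(G_4)|=10$, we must have $G\cong G_4^{\triangle}(z)$ for some vertex $z\in V(G_4)$. I will run a case analysis over the orbit of $z$ under the automorphism group of $G_4$ and show that every orbit representative either gives $G\cong G_6$ or yields a contradiction. Because $G_4=G_2^{\triangle}(v')$ inherits much of the bicorn's symmetry, the number of orbit representatives should be small, which keeps the analysis manageable.

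Three rule-out mechanisms will be used for each orbit representative $z$. First, if $G_4^{\triangle}(z)$ happens to contain $G_3$ as a base, this contradicts the Case \ref{cas2} hypothesis. Second, if $G_4^{\triangle}(z)$ contains $R_1$ as a base, then since $uu'$ and $vv'$ are bottom edges of pyramids in $R_1$ that are not forcing (as established inside the proof of Claim \ref{u-v}), Corollary \ref{bf} delivers a $b$-invariant but non-forcing edge of $G$, contradicting the standing hypothesis. Third, for the surviving choices of $z$, I aim to locate a pyramid in $G_4^{\triangle}(z)$ itself, let $e$ be its bottom edge, and exhibit two distinct perfect matchings of $G_4^{\triangle}(z)-V(e)$; by Lemma \ref{lem4}, such an $e$ is $b$-invariant in $G$, while the two matchings show $e$ is not forcing, once more contradicting the hypothesis on $G$. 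Only one orbit representative should survive all three tests, and this representative must yield $G\cong G_6$.

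The main obstacle will be the explicit bookkeeping: enumerating a minimal set of orbit representatives of $V(G_4)$, and for each bad representative producing either the requisite $G_3$- or $R_1$-base or an explicit pyramid together with two perfect matchings witnessing non-forcingness of its bottom edge. Every pyramid used will be either a pyramid already present in $G_4$ and preserved under the $Y\to\triangle$-operation at $z$, or a new pyramid created by the triangle replacement at $z$ together with a neighbouring triangle of $G_4$. The matching-enumeration steps will closely parallel those already carried out in Claim \ref{ex-R} for $R_2$ and in the preceding claim for $R_3$ and $R_4$, so no new tools beyond Lemma \ref{lem4}, Corollary \ref{bf}, and direct inspection are required; the effort is entirely in the case analysis itself.
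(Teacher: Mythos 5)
Your plan is correct and is essentially the paper's own argument: the paper likewise writes $G=G_4^{\triangle}(z)$, disposes of the vertices $z$ whose expansion yields a graph with $G_3$ or $R_1$ as a base via the standing Case~\ref{cas2} hypothesis, handles $z=v_1$ (giving $R_5$) by exhibiting a pyramid whose bottom edge $v_2v_3$ is $b$-invariant by Lemma~\ref{lem4} but not forcing, and concludes $G\cong G_4^{\triangle}(v_3)\cong G_6$ by symmetry. The only difference is that you defer the explicit orbit enumeration, which the paper compresses into ``by symmetry''; your additional insistence on exhibiting two perfect matchings of $R_5-v_2-v_3$ is a point the paper leaves implicit.
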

   \begin{proof}
   We consider the triangle $v_1v_2v_3v_1$ in $G_4$ (see Fig. \ref{f4}).
   If $G=G_4^{\triangle}(v_1)$, then $G\cong R_5$ (see Fig. \ref{f7}).
   In $R_5$, $v_2v_3$ is the bottom edge of a pyramid.
   By Lemma \ref{lem4}, $v_2v_3$ is a $b$-invariant edge of $R_5$, a contradiction.
   Since neither $R_1$ nor $G_3$ is a base of $G$,
   $G\cong G_4^{\triangle}(v_3)\cong G_6$ by symmetry.
   \end{proof}

   \begin{figure}[ht]
      \centering
      \includegraphics[scale=0.35]{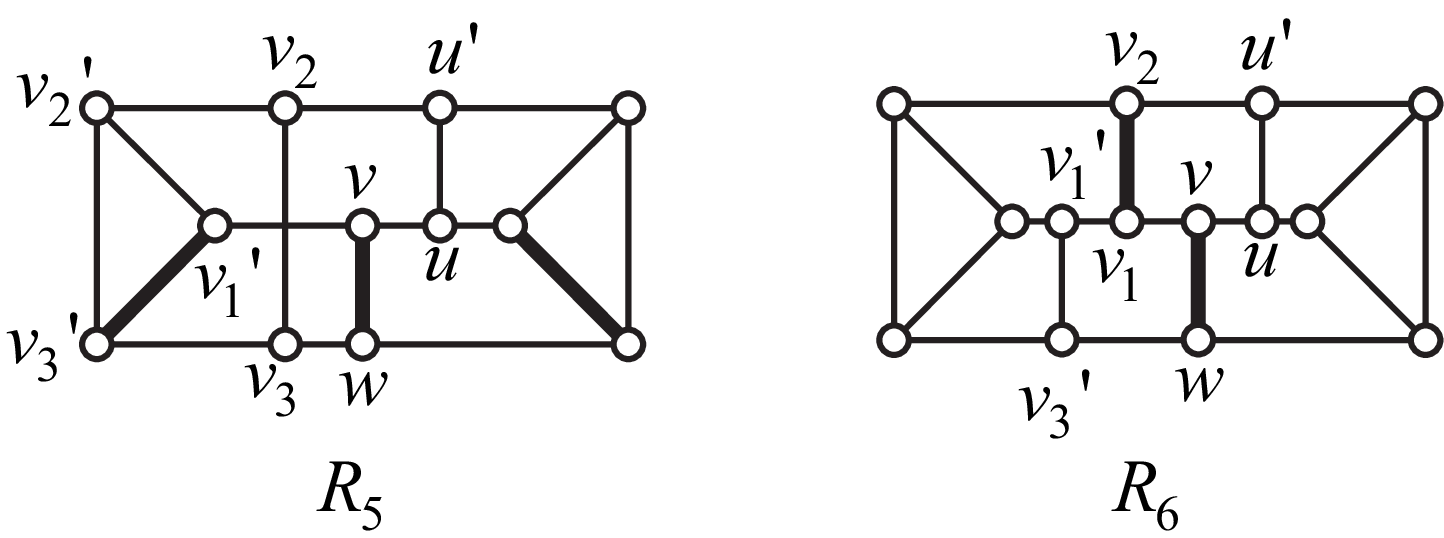}
      \caption{\label{f7}$R_5=G_4^{\triangle}(v_1)$ and $R_6=G_6^{\triangle}(v_2')$.}
   \end{figure}

   Next we show that $|V(G)|\leq 12$ in this case.
   If $|V(G)|\geq 14$, then either $R_5$ (see Fig. \ref{f7}) or $G_6$ is a base of $G$.
   If $R_5$  is a base of $G$, then according to the assumption that 
   $G_3$ is not a base of $G$, we can
   deduce that both $v_2$ and $v_3$  are also vertices of $G$.
   However, by Corollary \ref{bf},
   the edge $v_2v_3$ in $R_5$ is a $b$-invariant but not forcing edge of $G$,
   a contradiction.
   So $R_5$ is not a base of $G$.

   If $G_6$ is a base of $G$, then by the assumption, all the vertices not in any triangle of
   $G_6$ are also vertices of $G$.
   Since neither $R_1$ nor $R_5$ is a base of $G$,
   $G$ is isomorphic to $R_6$ (see Fig. \ref{f7}) or has a base $R_6$.
   For the graph $R_6$,
   obviously, $v_1'v_3'$ is the bottom edge of a pyramid.
   By Lemma \ref{lem4}, $v_1'v_3'$ is a $b$-invariant edge of $R_6$.
   Since $v_1'v_3'$ is not a forcing edge of $R_6$, $G\ncong R_6$.
   If $R_6$ is a base of $G$, then $v_1'$ and $v_3'$ in $R_6$
   are both vertices of $G$ by assumption.
   By Corollary \ref{bf}, $v_1'v_3'$ is a $b$-invariant but not forcing edge of $G$,
   a contradiction.
   Therefore, $|V(G)|\leq 12$ and $G\cong G_6$.

   According to above discussion, we know that $G\in \{G_2\ldots,G_8\}$,
   and this completes the proof of Theorem \ref{th1}.
   \\[7pt]
   \indent
   Let $v_0$ be a 2-degree vertex of a matching covered graph $G$ with $|V(G)|\geq 4$,
   and $v_1$ and $v_2$ be two neighbours of $v$.
   Then the operation of contracting $\{v_0,v_1,v_2\}$ in $G$ to a single vertex is
   called the \emph{bicontraction} of $v_0$.
   An edge $e$ of a brick $G$ is called a \emph{thin edge} if
   $\widehat{G-e}$ is a brick, where $\widehat{G-e}$ means to bicontract 2-degree vertices
   of ${G-e}$ repeatedly until there is no 2-degree vertices. Obviously,
   every thin edge is a $b$-invariant edge.

It can be checked that all $b$-invariant edges in $\{G_i,2\leq i\leq 8\}$ are also thin edges.
In the process of performing $Y\rightarrow \triangle$-operations, $R_i$ ($1\leq i\leq 6$)
in above proof has a $b$-invariant but not forcing edge.
It can be checked that all $b$-invariant edges in $\{R_i,1\leq i\leq 6\}$ are also thin edges.
It means that if all thin edges of $G$ are forcing edges, then $G$ does not belong to $\{R_i,1\leq i\leq 6\}$ and has no base isomorphic to $R_i$.
So we have the following result.

\begin{Not}
   $\{G_i, 2\leq i\leq 8\}$, $K_4$, $\overline{C_6}$ and the Petersen graph are
   all the cubic bricks satisfying that all thin edges are forcing edges.
\end{Not}

\end{document}